\newtheorem{lemma}{Lemma}[section]
\newtheorem{definition}{Definition}[section]
\newtheorem{proposition}{Proposition}[section]
\newtheorem{theorem}{Theorem}[section]
\newtheorem{corollary}{Corollary}[section]
\newtheorem{remark}{Remark}[section]
\newtheorem{example}{Example}[section]
\begin{document}

\title{A New Characterization of Flat Affine Manifolds and 
	the Associative Envelope of a Left Symmetric Algebra }


\author{Medina A.*,  Saldarriaga, O.**, and Villabon, A.**}

\subjclass[2010]{Primary: 57S20, 54H15; Secondary: 53C07, 17D25  \\ Partially Supported by CODI, Universidad de Antioquia. Project Number 2015-7654.}
\date{\today}

\begin{abstract} This paper deals with flat affine connections on paracompact real manifolds. We give a new characterization of flat affine connections  on  manifolds by means of a certain affine representation  of the Lie group of automorphisms preserving the connection. From the infinitesimal point of view, this representation is determined by the  1-connection form and  the fundamental form of the bundle of linear frames of the manifold. Using the fact that the space of  infinitesimal affine transformations of a flat affine manifold has a natural structure of  finite dimensional associative algebra, we prove the existence of an  associative envelope of the Lie algebra of the Lie group of affine transformations of the manifold. In particular for a Lie group $G$ endowed with a flat affine left invariant connection, we show the existence of a Lie group endowed with a flat affine bi-invariant   connection whose Lie algebra contains the Lie algebra of  $G$. We also exhibit some results about  flat affine manifolds whose group of affine transformations admits a flat affine bi-invariant structure.  The paper is illustrated with several examples.
\end{abstract} 
\maketitle

Keywords: Flat affine manifolds,  Infinitesimal affine transformations, Sheaves of Lie algebras, Associative Envelope.

\vskip5pt
\noindent
 * Universit\'e  Montpellier,  Institute A. Grothendieck, UMR 5149 du CNRS, France and Universidad de Antioquia, Colombia

e-mail: alberto.medina@umontpellier.fr  
\vskip5pt
\noindent
 ** Instituto de Matem\'aticas, Universidad de Antioquia, Medell\'in-Colombia

 e-mails: omar.saldarriaga@udea.edu.co, andresvillabon2000@gmail.com

\section{Introduction}
 
The objects of study of this paper are flat affine paracompact manifolds and their affine  transformations. A well understanding of the category of Lagrangian manifolds assumes a good knowledge of the category of flat affine manifolds (Theorem 7.8 in \cite{W}, see also \cite{F}). Recall that flat affine manifolds with holonomy reduced to $GL_n(\mathbb{Z})$ appear naturally in integrable systems and Mirror Symmetry (see \cite{KS}). 

For simplicity, in what follows $M$ is a connected real n-dimensional manifold, $P=L(M)$ its bundle of linear frames, $\theta$ the fundamental 1-form, $\Gamma$  a linear connection on $P$ of connection form  $\omega$  and  $\nabla$ the covariant derivative on $M$ associated to $\Gamma$. The pair $(M,\nabla)$ is called a flat affine manifold if the curvature and torsion tensors of $\nabla$ are both null. In the case where $M$ is a Lie group and the connection is left invariant, we call it a flat affine Lie group. The correspong Lie algebra Lie$(G)$ will be said a flat affine Lie algebra. A Lie group is flat affine if and only if its Lie algebra is endowed with a left symmetric product compatible with the bracket.

 An affine transformation of $(M,\nabla)$ is a diffeomorphism $f$ of $M$  whose derivative map $f_*:TM\longrightarrow TM$ sends parallel vector fields into parallel vector fields, and therefore geodesics into geodesics (together with its affine parameter). An infinitesimal affine transformation of $(M,\nabla)$ is a smooth vector field $X$ on $M$ whose local 1-parameter groups $\phi_t$ are local affine transformations. We will denote by $\mathfrak{a}(M,\nabla)$ the real vector space of infinitesimal affine transformations of $(M,\nabla)$ and for $\mathfrak{X}(M)$ the Lie algebra of smooth vector fields on $M$. An element $X$ of $\mathfrak{X}(M)$ belongs to $ \mathfrak{a}(M,\nabla)$ if and only if it verifies
\begin{equation}\label{Eq:ecuacionkobayashi} \mathcal{L}_X\circ \nabla_Y -\nabla_Y\circ \mathcal{L}_X=\nabla_{[X,Y]},\quad\text{for all}\quad Y\in \mathfrak{X}(M),\end{equation}
where $\mathcal{L}_X$ denotes the Lie derivative. For a vector field $X\in\mathfrak{X}(M)$, we will denote by $L(X)$ its natural lift on $P$. It is also well known that
\begin{equation} \label{Eq:condition1oninfinitesimalaffinetransformations} X\in\mathfrak{a}(M,\nabla)\text{ if and only if }\mathcal{L}_{L(X)}\omega=0. \end{equation}  This is also equivalent to say that $L(X)$ commutes with every standard horizontal vector field (see Section 2 for the definition of standard horizontal vector field). We will denote by $\mathfrak{a}(P,\omega)$ the set of vector fields $Z$ on $P$ satisfying 
\begin{align} \notag
& Z \text{ is invariant by }R_a\text{ for every }a\in GL_n(\mathbb{R})\\ \label{Eq:infinitesimalaffinetransf}
& \mathcal{L}_Z\theta=0 \\ \notag
&\mathcal{L}_Z\omega=0,
\end{align} 
where $R_a$ denotes the right action of $GL_n(\mathbb{R})$ on $P$. The map $X\mapsto L(X)$ is an isomorphism of Lie algebras from $\mathfrak{a}(M,\nabla)$ to $\mathfrak{a}(P,\omega)$. The vector subspace $\mathsf{aff}(M,\nabla)$ of $\mathfrak{a}(M,\nabla)$ whose elements are complete, with the usual bracket of vector fields,  is the Lie algebra of the group $\mathsf{Aff}(M,\nabla)$ of affine transformations of $(M,\nabla)$ (see \cite{KN}). The image of $\mathsf{aff}(M,\nabla)$ under the isomorphism $X\mapsto L(X)$ will be denoted by $\mathfrak{a}_c(P)$. If $\phi_t$ is the flow of $X$, the flow  of $L(X)$ is the natural lift $L(\phi_t)$ of $\phi_t$ given by $L(\phi_t)(u)=(\phi_t(m),(\phi_t)_{*,m}X_{ij}),$ where $u=(m,X_{ij})$. For the purposes of this work, it is useful to recall that $(P,\omega)$ admits an absolute parallelism, that is, its tangent bundle admits $n^2+n$ sections independent at every point.

\begin{remark} If $X\in\mathfrak{a}(M,\nabla)$ of flow $\phi_t$, the flow  $L(\phi_t)$ of $Z=L(X)$   determines a local isomorphism of $(P,\omega).$ This implies the existence of an open covering $(U_\alpha)_{\alpha\in\mathcal{A}}$ trivializing the bundle $(L(M),M,\pi)$ such that the maps $L(\phi_t):(\pi^{-1}(U_\alpha),\omega)\longrightarrow (\pi^{-1}(\phi_t(U_\alpha)),\omega)$ are isomorphisms. When $Z\in\mathfrak{a}_c(P)$ is complete, the maps $L(\phi_t)$ are global automorphisms of $(P,\omega)$. 
	
Notice that, for every $t$, the map $L(\phi_t)$ preserves the fibration and the horizontal distribution associated to $\omega$.\end{remark}

This paper is organized as follows. In Section \ref{S:newresults} we state a characterization of flat affine manifolds and provide some of its consequences. In the theory of flat affine manifolds appears, in a natural way, a finite dimensional associative  algebra (see Lemma \ref{L:associativity}). This will allow to prove the existence of a natural  associative envelope of the Lie algebra of the Lie group of affine transformations of the manifold. In Section \ref{S:associativeenvelope} we specialize our study to the case of flat affine Lie groups and we exhibit some examples. Section \ref{S:studyofaffinetransformationgroups} deals with the study of the following question. Does the group of affine transformations, $\mathsf{Aff}(M,\nabla)$, of a flat affine manifold $(M,\nabla)$ admit a left invariant flat affine or flat projective structure? This question was answered positively in some particular cases  in the special case of some flat affine Lie groups (see in \cite{MSG}).  

\section{Characterization of flat affine  manifolds} \label{S:newresults}

Before stating the main result of the section, let us denote  by  $A^*$    the fundamental vector field  associated to $A\in\mathfrak{g}=\mathsf{gl}_n(\mathbb{R})$ and by $B(\xi)$ the basic or standard horizontal vector fields ($\theta(B(\xi)):=\xi$  for all $\xi\in\mathbb{R}^n$). Recall that the fundamental form  $\theta$ of $P$ is a tensorial 1-form of type $(GL(\mathbb{R}^n),\mathbb{R}^n)$, that is, it verifies $(R_{a}^*\theta)(Z)=a^{-1}(\theta(Z))$, \ for all \ $a\in\mathsf{GL}_n(\mathbb{R})$ and $Z\in\mathfrak{X}(P)$. 

If  $\Gamma$ is a linear connection over $M$, its connection  form $\omega$ is a  $GL_n(\mathbb{R})$-valued 1-form satisfying  
\begin{align}\label{Eq:omegafixesfundamentals}
&\omega(A^*)=A,\quad\text{for all} \quad A\in\mathsf{gl}_n(\mathbb{R}),\\ \label{Eq:omegaistensorial}
&R_{a}^*\omega=\mathsf{Ad}_{a^{-1}}\omega,\quad\text{for all}\quad a\in GL_n(\mathbb{R}).
\end{align}
It is well known that if $\Omega$ and $\Theta$ are respectively the curvature and torsion forms of the connection $\omega$,  they   verify Cartan's structure equations 
\begin{align*}
d\omega(X,Y)&=-\dfrac{1}{2}\left[\omega(X),\omega(Y)\right]+\Omega(X,Y),\\ 
d\theta(X,Y)&=-\dfrac{1}{2}\left(\omega(X)\cdot\theta(Y)-\omega(Y)\cdot\theta(X)\right)+\Theta(X,Y),
\end{align*}
for all $X,Y\in T_u(P)$, $ u\in P$.  Notice that $\{B(e_1),\dots,B(e_n),E_{11}^*,\dots,E_{nn}^*\}$ is a paralelism of $P$, where $(e_1,\dots,e_n)$ is the natural basis of $\mathbb{R}^n$ and $\{E_{11},\dots,E_{nn}\}$ is the usual basis of $gl_n(\mathbb{R})$.

Considering the simple sheaf $\mathcal{F}$ of base $P$ and fibre the Lie algebra $\mathsf{aff}(\mathbb{R}^n)$ generated by the presheaf $U\rightarrow \mathsf{aff}(\mathbb{R}^n)$, where $U$ is an open set in $P$, we have.

\begin{lemma}  The simple sheaf $\mathcal{F}$ acts on $\mathbb{R}^n$ by 
\begin{equation} \label{EQ:infinitesimalaction} (u,s(u))\cdot w=(u,(v_u,f_u))\cdot w:=v_u+f_u(w) \end{equation}
Moreover, if $\eta:Sec(TP)\longrightarrow Sec(P\times \mathsf{aff}(\mathbb{R}^n))$ is a homomorphism of sheaves of Lie algebras, then $Sect(TP)$ acts  on $\mathbb{R}^n$ via $\eta$.	\end{lemma}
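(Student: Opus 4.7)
The plan is to treat both parts of the lemma as essentially tautological extensions of the standard infinitesimal affine action of $\mathsf{aff}(\mathbb{R}^n)$ on $\mathbb{R}^n$. First I would recall that the affine group $\mathsf{Aff}(\mathbb{R}^n)=\mathbb{R}^n\rtimes GL_n(\mathbb{R})$ acts on $\mathbb{R}^n$ by $(v,g)\cdot w=v+g(w)$, so that differentiating at the identity yields a Lie algebra homomorphism $\rho\colon \mathsf{aff}(\mathbb{R}^n)\to\mathfrak{X}(\mathbb{R}^n)$ sending $(v,A)$ to the affine vector field $w\mapsto v+A(w)$. A short direct check using the semidirect-product bracket
\[
[(v_1,A_1),(v_2,A_2)]=(A_1 v_2-A_2 v_1,\ [A_1,A_2])
\]
confirms that $\rho$ is indeed a Lie algebra morphism, by comparing both sides against the bracket of the vector fields $X_i(w)=v_i+A_iw$ on $\mathbb{R}^n$; this is precisely the content of formula~(\ref{EQ:infinitesimalaction}) at a single point $u\in P$.

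For the first assertion, I would unpack the simple sheaf structure: a section $s$ of $\mathcal{F}$ over an open $U\subseteq P$ is a map $U\to \mathsf{aff}(\mathbb{R}^n)$ in the sense appropriate to the simple sheaf generated by the constant presheaf $U\mapsto \mathsf{aff}(\mathbb{R}^n)$, and formula~(\ref{EQ:infinitesimalaction}) is nothing more than the pointwise application of $\rho$, namely $s(u)\cdot w=\rho(s(u))(w)=v_u+f_u(w)$. Compatibility with the restriction of sections is automatic because the expression depends only on the value $s(u)$, while compatibility with the Lie bracket follows from the pointwise identity $[s_1,s_2](u)=[s_1(u),s_2(u)]$ together with the fact that $\rho$ preserves brackets.

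For the second assertion, given the homomorphism of sheaves of Lie algebras $\eta$, I would transport the action via $\eta$: to a local section $Z\in Sec(TP)(U)$ I associate the $u$-dependent affine vector field on $\mathbb{R}^n$ defined by $w\mapsto \eta(Z)(u)\cdot w=v_u+f_u(w)$ with $(v_u,f_u)=\eta(Z)(u)$. Since $\eta$ commutes with restrictions and preserves brackets, this composition delivers an action of $Sec(TP)$ on $\mathbb{R}^n$ factoring through $\eta$. There is essentially no technical obstacle; the only mild point requiring attention is to fix the precise meaning of \emph{action of a sheaf of Lie algebras on $\mathbb{R}^n$} as a pointwise family of Lie algebra morphisms into $\mathfrak{X}(\mathbb{R}^n)$ parameterized by $u\in P$, after which both assertions reduce to the computations already recorded.
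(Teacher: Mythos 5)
Your proposal is correct and takes essentially the same route as the paper: both reduce formula \eqref{EQ:infinitesimalaction} to the pointwise standard infinitesimal affine action of $\mathsf{aff}(\mathbb{R}^n)$ on $\mathbb{R}^n$ (the paper phrases the sheaf-theoretic part via the \'etale space of the constant sheaf, whose sections are locally constant, while you phrase it as pointwise application of the representation $\rho$), and both obtain the second assertion simply by composing with $\eta$. The only caveat, purely a matter of convention, is that with the semidirect-product bracket you quote, the map $(v,A)\mapsto\bigl(w\mapsto v+Aw\bigr)$ is an anti-homomorphism into $\mathfrak{X}(\mathbb{R}^n)$ with its usual bracket, which is exactly what an infinitesimal left action is, so nothing of substance changes.
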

\begin{proof} Recall the construction of the \'etale space corresponding to $\mathcal{F}$. As $\mathcal{F}(u)$ is a direct limit we have $\mathcal{F}(u)=\mathsf{aff}(\mathbb{R}^n)$ for any $u\in P$.

Moreover the sections over $U$ considered as maps from $U$ to $\mathsf{aff}(\mathbb{R}^n)$ are locally constant. Hence the topology of $\mathcal{F}=P\times \mathsf{aff}(\mathbb{R}^n)$ is the product topology of that of $P$ by the discrete topology of $\mathsf{aff}(\mathbb{R}^n)$.

As a consequence, Equation \eqref{EQ:infinitesimalaction} can be written locally as 
\[ s\cdot w=(v,f)\cdot w=v+f(w). \] 
This  gives an infinitesimal action of  $\mathcal{F}$ on $\mathbb{R}^n$.

The last assertion easily follows.
\end{proof}

As a consequence the Lie algebras $Sec(TP),$ $\mathfrak{a}(P)$ and $\mathfrak{a}_c(P)$ also acts infitesimally on $\mathbb{R}^n$.

In these terms we have the following result.

\begin{theorem} \label{T:infinitesimalversion} Let $P=L(M)$ be the principal bundle of linear frames of a manifold $M$, $\theta$ the fundamental form of $P$ and $\Gamma$ a linear connection on $P$ of connection form $\omega$. The following assertions are equivalent
\begin{enumerate}[(i)]
\item The linear connection $\Gamma$ on $P$  is flat affine.
\item The map
$$
\begin{array}{ccc}
\eta:Sec(TP)&\longrightarrow &Sec(P\times\mathsf{aff}(\mathbb{R}^n))\\	Z  &\mapsto         & (\theta(Z),\omega(Z))
\end{array},	$$
is a homomorphisms of sheaves of real Lie algebras.
\end{enumerate} 
\end{theorem}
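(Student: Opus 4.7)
The plan is to unravel the bracket on $\mathsf{aff}(\mathbb{R}^n) = \mathbb{R}^n \rtimes \mathsf{gl}_n(\mathbb{R})$, whose Lie product reads $[(v_1,A_1),(v_2,A_2)] = (A_1 v_2 - A_2 v_1,[A_1,A_2])$. The homomorphism condition $\eta([Z,W]) = [\eta(Z),\eta(W)]$ then splits componentwise into
$$\theta([Z,W]) = \omega(Z)\cdot\theta(W) - \omega(W)\cdot\theta(Z), \qquad \omega([Z,W]) = [\omega(Z),\omega(W)].$$

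Next I would apply the standard Koszul identity $2\,d\alpha(Z,W) = Z\alpha(W) - W\alpha(Z) - \alpha([Z,W])$ to $\alpha=\theta$ and $\alpha=\omega$, and substitute Cartan's structure equations recalled in the preamble. After a short calculation the two componentwise identities become
$$Z\theta(W) - W\theta(Z) = 2\Theta(Z,W), \qquad Z\omega(W) - W\omega(Z) = 2\Omega(Z,W).$$

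I would then test these on the absolute parallelism $\{B(e_1),\dots,B(e_n), E_{11}^*,\dots,E_{nn}^*\}$ of $P$. Since $\theta(B(\xi))=\xi$ and $\omega(A^*)=A$ are constant while $\theta(A^*)=0$ and $\omega(B(\xi))=0$, the left-hand sides vanish identically on every pair of parallelism fields. For pairs containing a vertical $E_{jk}^*$ the right-hand sides also vanish, because $\Theta$ and $\Omega$ are horizontal tensorial $2$-forms. Only horizontal pairs $(B(\xi),B(\eta))$ yield nontrivial content, namely $\Theta(B(\xi),B(\eta)) = 0 = \Omega(B(\xi),B(\eta))$ for all $\xi,\eta \in \mathbb{R}^n$; by horizontality this is exactly $\Theta\equiv 0$ and $\Omega\equiv 0$, i.e., $\Gamma$ flat affine. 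This gives the equivalence (i)$\Leftrightarrow$(ii).

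The delicate step, which I expect to be the main obstacle, is upgrading the pointwise identities on the parallelism to a bona fide morphism of sheaves of Lie algebras. The Lie bracket of vector fields is not $C^\infty(P)$-bilinear, whereas the pointwise bracket on $\mathsf{aff}(\mathbb{R}^n)$-valued functions is; one reconciles the two by exploiting the unique decomposition $Z = \sum a^i\, B(e_i) + \sum b^{jk}\, E_{jk}^*$ of each section, together with the fact that in the flat affine case the brackets among parallelism fields remain of the same form, namely $[B(\xi),B(\eta)]=0$, $[A^*,B(\xi)]=B(A\xi)$ and $[A^*,C^*]=[A,C]^*$. With these closure relations in hand, the homomorphism property propagates from the parallelism to arbitrary local sections, completing the argument.
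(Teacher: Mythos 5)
Your central computation is the same as the paper's: split the bracket of $\mathsf{aff}(\mathbb{R}^n)=\mathbb{R}^n\rtimes\mathsf{gl}_n(\mathbb{R})$ into the two componentwise identities, combine Cartan's structure equations with the Koszul formula $2\,d\alpha(Z,W)=Z\alpha(W)-W\alpha(Z)-\alpha([Z,W])$, and use the tensoriality (horizontality) of $\Theta$ and $\Omega$ to reduce everything to the parallelism $\{B(e_i),E_{jk}^*\}$. The paper organizes it slightly differently — it obtains $\Omega=0$ from integrability of the horizontal distribution and evaluates $\Theta$ directly on basic fields, and it proves (i)$\Rightarrow$(ii) by case-checking vertical/horizontal pairs — but your uniform identities $Z\theta(W)-W\theta(Z)=2\Theta(Z,W)$ and $Z\omega(W)-W\omega(Z)=2\Omega(Z,W)$ package the direction (ii)$\Rightarrow$(i) cleanly; note, though, that since they are derived \emph{assuming} the homomorphism property, they only give that direction, and (i)$\Rightarrow$(ii) rests entirely on your final paragraph.

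That final step cannot be carried out in the generality you claim. The identity $\theta([Z,W])=\omega(Z)\cdot\theta(W)-\omega(W)\cdot\theta(Z)$ does \emph{not} propagate from the parallelism to arbitrary smooth sections via the $C^\infty$-decomposition $Z=\sum a^iB(e_i)+\sum b^{jk}E_{jk}^*$: with $\Gamma$ flat affine, take $Z=B(e_1)$ and $W=fB(e_2)$ with $f$ nonconstant; then $[Z,W]=(B(e_1)f)\,B(e_2)$, so $\theta([Z,W])=(B(e_1)f)\,e_2\neq 0$, while the right-hand side vanishes because $\omega(Z)=\omega(W)=0$. The derivative-of-coefficient terms are precisely the obstruction, and no closure relations among parallelism fields remove them. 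The way out — implicit in the paper's Lemma preceding the theorem — is that the target $Sec(P\times\mathsf{aff}(\mathbb{R}^n))$ is the \emph{simple} (constant) sheaf, whose sections are locally constant $\mathsf{aff}(\mathbb{R}^n)$-valued maps; so the homomorphism property only has to be (and only can be) verified on vector fields whose images under $(\theta,\omega)$ are locally constant, i.e.\ on locally constant combinations of the parallelism fields. There $\mathbb{R}$-bilinearity of both brackets together with your closure relations $[B(\xi),B(\eta)]=0$, $[A^*,B(\xi)]=B(A\xi)$, $[A^*,C^*]=[A,C]^*$ finishes the argument — which is in effect all the paper's own proof does, since it never treats nonconstant coefficients. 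Replace ``arbitrary local sections'' by ``sections mapping into the constant sheaf'' and your plan closes; as literally stated, it would fail.
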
\begin{proof} As $\mathsf{aff}(\mathbb{R}^n)=\mathbb{R}^n\rtimes_{id}\mathsf{gl}(\mathbb{R}^n)$ is the semidirect product of the abelian Lie algebra $\mathbb{R}^n$ and the Lie algebra of commutators of linear endomorphisms of $\mathbb{R}^n$, the map 
$$
\begin{array}{rccl}
\eta:& Sec(TP)&\longrightarrow&Sec(P\times\mathsf{aff}(\mathbb{R}^n))\\
&           Z            &\mapsto    &(\theta(Z),\omega(Z))
\end{array}
$$
is well defined and $\mathbb{R}$-linear.

That $\eta$ is a homomorphism of sheaves of Lie algebras means that
\begin{equation} \label{Eq:homomorphismeta} \eta([Z_1,Z_2])=(\omega(Z_1)\cdot\theta(Z_2)-\omega(Z_2)\cdot\theta(Z_1),[\omega(Z_1),\omega(Z_2)]),\text{ for all }Z_1,Z_2\in \text{Sec}(TP). \end{equation}
In other words, $\omega$ is a linear representation of the Lie algebra $Sec(TP)$ and  $\theta$ is a 1-cocycle relative to this representation.

Suppose that $\Gamma$ is flat and torsion free, then the structure equations for $\omega$ reduce to
\begin{align}\label{Eq:reducedestructureequation1} d\theta(Z_1,Z_2)&=
-\frac{1}{2}(\omega(Z_1)\cdot\theta(Z_2)-\omega(Z_2)\cdot\theta(Z_1))\\ \label{Eq:reducedestructureequation2}
d\omega(Z_1,Z_2)&=
	-\frac{1}{2}[\omega(Z_1),\omega(Z_2)]. \end{align}
It is clear that  \eqref{Eq:homomorphismeta} holds when both vector fields are vertical or if one is vertical and  the other is horizontal.

Now let us suppose that both $Z_1$ and $Z_2$ are horizontal vector fields. As the distribution determined by $\omega$ is integrable, it follows that $[Z_1,Z_2]$ is horizontal and  Equation \eqref{Eq:reducedestructureequation2} implies that $\omega([Z_1,Z_2])=[\omega(Z_1),\omega(Z_2)]$. Also notice that $\omega(Z_1)\cdot\theta(Z_2)-\omega(Z_2)\cdot\theta(Z_2)=0.$ Now, we can suppose that $Z_1$ and $Z_2$ are basic vector fields, that is, $Z_1=B(\xi_1)$ and $Z_2=B(\xi_2)$. Hence  from \eqref{Eq:reducedestructureequation1} we get  that $\theta([Z_1,Z_2])=0$. Therefore assertion (ii) follows from (i).
 

To prove that (ii) implies (i), we will show that  Equation \eqref{Eq:homomorphismeta} implies that the curvature and the torsion vanish, i.e., Equations \eqref{Eq:reducedestructureequation1} and \eqref{Eq:reducedestructureequation2} hold.

As $\eta$ is a homomorphism of sheaves of Lie algebras it follows that for $Z_1, Z_2\in Sec(TP)$ 
	\[ \omega([Z_1,Z_2])=[\omega(Z_1),\omega(Z_2)]\quad\text{ and}\quad \theta([Z_1,Z_2])=\omega(Z_1)\cdot\theta(Z_2)-\omega(Z_2)\cdot\theta(Z_1)\] 
	
The first equality means that the local horizontal distribution on $P$ determined by $\omega$ is  completely integrable, hence  $\Omega$ vanishes.

On the other hand, as the torsion $\Theta$ is a tensorial 2-form on $P$ of type $(GL_n(\mathbb{R}),\mathbb{R}^n)$, we have $\Theta(Z_1,Z_2)=0$ if $Z_1$ or $Z_2$ is vertical. Now,  if  $Z_1=B(\xi_1)$ and $Z_2=B(\xi_2)$  for some $\xi_1,\xi_2\in\mathbb{R}^n$, we obtain
\begin{eqnarray*}
\Theta(Z_1,Z_2) & = & d\theta(Z_1,Z_2)\\ & = & \frac{1}{2}\left(Z_1(\theta(Z_2))-Z_2(\theta(Z_1))-\theta([Z_1,Z_2])\right)\\ & = & \frac{1}{2}\left(Z_1(\xi_2)-Z_2(\xi_1)-\omega(Z_1)\cdot\theta(Z_2)+\omega(Z_2)\cdot\theta(Z_1)\right)\\ & = & 0.
\end{eqnarray*}
Consequently the torsion form $\Theta$ vanishes. 	
\end{proof}

As $P=L(M)$ is parallelizable, the group $K$ of transformations of this parallelism is a Lie group of dimension at most  $\dim P=n^2+n$. More precisely, given $\sigma\in K$ and for any $u\in P$, the map $\sigma\mapsto \sigma(u)$ is injective and its image $\{\sigma(u)\mid \sigma\in K\}$ is a closed submanifold of $P$. The submanifold structure of $\{\sigma(u)\mid \sigma\in K\}$ turns $K$  into a Lie transformation group of $P$ (see \cite{SK}).  The group $Aut(P,\omega)$, of diffeomorphisms of $P$ preserving $\omega$ (therefore $\theta$), is a  closed Lie subgroup of $K$, so it is a Lie group of transformations of $P$.


\begin{remark} \label{R:affineatlas}
The group $Aut(L(\mathbb{R}^n),\omega^0)$  of diffeomorphisms of $L(\mathbb{R}^n)$ preserving the usual connection $\omega^0$ on $\mathbb{R}^n$ is isomorphic to the  group $\mathsf{Aff}(\mathbb{R}^n,\nabla^0)$, called the classic affine group. Also recall that  having a flat affine structure $\nabla$ on $M$ is equivalent to have a smooth atlas whose change of coordinates are elements of $Aut(L(\mathbb{R}^n),\omega^0)$.	
\end{remark}

The following is a consequence of Theorem \ref{T:infinitesimalversion}. 
\begin{corollary} Let $\Gamma$ be a linear connection of connection form $\omega$   \begin{enumerate}[a.]
\item If $\Gamma$ is a metric connection, i.e., $\nabla g=0$ with $g$ a pseudo-Riemannian metric, then $\Gamma$  is flat affine if and only if the homomorphism $\eta$ of the Theorem \ref{T:infinitesimalversion} takes values in the Lie algebra $e_{(p,q)}(n)=\mathbb{R}^n\rtimes o_{(p,q)}$, of the pseudo-Euclidian group $E_{(p,q)}(n)=\mathbb{R}^n\rtimes O_{(p,q)}$, where $(p,q)$ denotes the signature of the pseudo-Riemannian metric.
\item If the connection $\Gamma$ preserves a volume form, then $\Gamma$ is flat affine if and only if  the homomorphism $\eta$ of the Theorem \ref{T:infinitesimalversion} takes values in the Lie algebra $\mathbb{R}^n\rtimes sl_n(\mathbb{R})$ of the group $\mathbb{R}^n\rtimes SL_n(\mathbb{R}).$ 
		
\noindent 
In particular, if $\Gamma$ preserves a symplectic form  $\sigma$, i.e. $\nabla\sigma=0$, then $\Gamma$ is flat affine if and only if  the homomorphism $\eta$ of Theorem \ref{T:infinitesimalversion} takes values in the Lie algebra $\mathbb{R}^{n}\rtimes sp_{n}(\mathbb{R})$  of $\mathbb{R}^{n}\rtimes Sp_{n}(\mathbb{R}),$ where $Sp_{n}(\mathbb{R})$ is the group of linear simplectomorphisms.\end{enumerate}
\end{corollary}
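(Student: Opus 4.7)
The plan is to reduce each equivalence to Theorem \ref{T:infinitesimalversion} by passing to the principal subbundle of $P=L(M)$ distinguished by the extra tensor preserved by $\Gamma$. Recall the classical correspondence: $\nabla T=0$ for $T\in\{g,\mathrm{vol},\sigma\}$ is equivalent to $\Gamma$ restricting to a principal connection on an $H$-subbundle $Q\subset P$ of frames in which $T$ takes its standard constant form, where $H$ is respectively $O_{(p,q)}$, $SL_n(\mathbb{R})$, or $Sp_n(\mathbb{R})$. On such a $Q$ the restriction $\omega|_Q$ takes values in the Lie subalgebra $\mathfrak{h}\subset gl_n(\mathbb{R})$, while $\theta$ remains $\mathbb{R}^n$-valued; hence $\eta$ applied to sections of $TQ$ lands in $\mathbb{R}^n\rtimes\mathfrak{h}$, which is exactly the subalgebra named in each item.

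The key step is to apply Theorem \ref{T:infinitesimalversion} verbatim to $Q$ in place of $P$. Since $\Omega$ and $\Theta$ are tensorial horizontal forms that vanish on vertical vectors, their vanishing on $P$ is equivalent to the vanishing of their pullbacks to $Q$; equivalently, $\Gamma$ is flat affine as a connection on $P$ iff $\omega|_Q$ is flat and torsion-free as a principal $\mathfrak{h}$-connection on $Q$. The sheaf-theoretic proof of Theorem \ref{T:infinitesimalversion} uses only the Cartan structure equations, the integrability of the horizontal distribution, and the absolute parallelism built from $\theta$ together with fundamental vector fields corresponding to a basis of $\mathfrak{h}$; all of these ingredients are available on $Q$. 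Thus the same argument shows that $\omega|_Q$ is flat affine iff $\eta\colon Sec(TQ)\to Sec(Q\times(\mathbb{R}^n\rtimes\mathfrak{h}))$ is a homomorphism of sheaves of Lie algebras, yielding all three equivalences.

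The main (though minor) obstacle I anticipate is the careful transfer of Theorem \ref{T:infinitesimalversion} from $P$ to the reduced bundle $Q$: one must verify that the computation pairing basic horizontal vector fields with fundamental vertical ones still isolates the vanishing of $\Omega$ and $\Theta$ when these basic fields are taken tangent to $Q$. This is standard in the pseudo-Riemannian and unimodular cases; in the symplectic subcase one additionally notes that $n$ must be even and uses that the restrictions of the $B(\xi_i)$ to the symplectic frame bundle still span its horizontal distribution, which is enough to detect $d\theta=0$. With this verification in place, parts (a) and (b), including the symplectic specialization, follow immediately by combining the bundle reduction with Theorem \ref{T:infinitesimalversion}.
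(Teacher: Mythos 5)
Your proposal is correct and follows essentially the same route as the paper: the paper's proof is precisely the reduction of $L(M)$ to the subbundle with structure group $O_{(p,q)}$, $SL_n(\mathbb{R})$, or $Sp_n(\mathbb{R})$, on which $\eta$ takes values in $Sec(P\times(\mathbb{R}^n\rtimes\mathfrak{h}))$, with Theorem \ref{T:infinitesimalversion} then applied on the reduced bundle. Your additional verification that the structure-equation argument transfers to the reduced bundle is exactly the (standard) detail the paper leaves implicit.
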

\begin{proof} The result follows by reduction of the principal bundle $L(M)$ to the respective subbundle with structural group $O_{(p,q)}$, $SL_n(\mathbb{R})$ and $Sp_{n}(\mathbb{R})$. In every case, the homomorphism $\eta$ takes values in the sheaf of Lie algebras $Sec(P\times (\mathbb{R}^n\rtimes o_{(p,q)}))$, $Sec(P\times (\mathbb{R}^n\rtimes sl_n(\mathbb{R}))$ and  $Sec(P\times (\mathbb{R}^n\rtimes sp_{n}(\mathbb{R}))$, respectively. 
\end{proof}

\begin{remark} 
If $\Gamma$ is flat affine and $Z=L(X)\in\mathfrak{a}(P,\omega)$, the local isomorphism $L(\phi_t)$ preserves the foliation defined by $\omega$, where $\phi_t$ is the flow of $X$. That is, the natural lift $L(\phi_t)$ of $\phi_t$ is a local isomorphism of $(P,\omega)$. 
\end{remark}

The groups $Aut(P,\omega)$ and  $\mathsf{Aff}(M,\nabla)$ are  isomorphic. We denote by  $Aut(P,\omega)_0$ the unit component of $Aut(P,\omega)$ and by $\widehat{Aut}(P,\omega)_0$ its universal covering Lie group.

\begin{example} \label{Ex:calculationofaffinegroups} Let  $M_1$, $M_2$ and $M_3$  be the plane without one, two and three non colineal points respectively endowed with $\nabla_i$ the connection $\nabla^0$ restricted to $M_i$, $i=1,2,3$.  Let us suppose that the  points removed are $p_1=(0,0)$, $p_2=(0,1)$ and $p_3=(1,0)$. An easy calculation shows that the corresponging groups of affine transformations of $M_i$ relative to $\nabla_i$ are given by
	\[  \mathsf{Aff}(M_1,\nabla_1)=\{  F:M_1\longrightarrow M_1\mid F(x,y)=(ax+by,cx+dy)\quad\text{suh that}\quad ad-bc\ne0\}.     \]
	\[  \mathsf{Aff}(M_2,\nabla_2)=\{  F\in Diff(M_2)\mid F(x,y)=(ax,bx+y)\text{ or }  F(x,y)=(ax,bx-y+1), \ a\ne0\}.     \]
The connected component of the unit of the group $ Aut(L(M_2),\omega_2)$, of automorphisms of $P=L(M_2)$ preserving the connection form $\omega_2$ associated to $\nabla_2$ is given by
\[\left\{F_{a,b}\in Diff(P)\ \bigg|\  F_{a,b}\left(x,y,\left[\begin{matrix}X_{11}&X_{12}\\X_{21}&X_{22}\end{matrix}\right]\right)=\left(ax,bx+y,\left[\begin{matrix}aX_{11}&aX_{12}\\bX_{11}+X_{12}&bX_{12}+X_{22}\end{matrix}\right]\right)\right\}.  \]
The group $\mathsf{Aff}(M_3,\nabla_3)$ is discrete, its elements are the set of affine transformations of $(\mathbb{R}^2,\nabla^0)$ permuting the points  $p_1,\ p_2$ and $p_3$. 
	
	Moreover, consider $\sigma_i$ the restriction of the usual symplectic form $\sigma^0=dx\wedge dy$ on $M_i$, $i=1,2,3$. It is clear  that the connection $\nabla^0$  is symplectic with respect to $\sigma^0$. Consequently the manifolds $(M_i,\nabla_i,\sigma_i)$, $i=1,2,3$, are flat affine symplectic manifolds. Then the corresponding groups of affine symplectomorphisms are given by 
	\[ Sp\mathsf{Aff}(M_1,\sigma_1,\nabla_1)=\{  F:M_1\longrightarrow M_1\mid F(x,y)=(ax+by,cx+dy)\quad\text{with}\quad ad-bc=1\}.     \]
	\[  Sp\mathsf{Aff}(M_2,\sigma_2,\nabla_2)=\{  F\in Diff(M_2)\mid F(x,y)=(x,bx+y)\text{ or }  F(x,y)=(-x,bx-y+1)\}.     \]
	Finally, the elements of the  group $Sp\mathsf{Aff}(M_3,\nabla_3)$ are affine transformations of $(\mathbb{R}^2,\nabla^0)$ permuting the points  $p_1,\ p_2$ and $p_3$ and preserving the orientation determined by $\sigma^0$. 
\end{example}

The following consequence of Theorem \ref{T:infinitesimalversion}  is obtained by applying Lie's third theorem and Cartan's Theorem  (see \cite{Che} for details on the proof of  Lie's third theorem).

\begin{corollary} \label{C:integrationofeta} Let $(M,\nabla)$ be a flat affine connected manifold    of connection form $\omega$. Then there exists a Lie group homomorphism $\rho:\widehat{Aut}(P,\omega)_0\longrightarrow Aut(L(\mathbb{R}^n),\omega^0)$ determined by $(\theta,\omega)$.
\end{corollary}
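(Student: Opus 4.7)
The plan is to extract from Theorem \ref{T:infinitesimalversion} a Lie algebra homomorphism between finite-dimensional Lie algebras and integrate it via Lie's third theorem together with Cartan's theorem on closed subgroups.

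First, I fix a base point $u_0 \in P$. For $X \in \mathsf{aff}(M,\nabla)$ with natural lift $Z = L(X) \in \mathfrak{a}_c(P)$, define
\[
\eta_{u_0}(X) := \bigl(\theta_{u_0}(Z),\,\omega_{u_0}(Z)\bigr) \in \mathbb{R}^n \rtimes \mathsf{gl}_n(\mathbb{R}) = \mathsf{aff}(\mathbb{R}^n).
\]
This assignment is manifestly $\mathbb{R}$-linear. Evaluating Equation \eqref{Eq:homomorphismeta} at $u_0$ with $Z_i = L(X_i)$ produces exactly the semidirect-product bracket of $\eta_{u_0}(X_1)$ and $\eta_{u_0}(X_2)$, so $\eta_{u_0}$ preserves brackets, and hence is a homomorphism of finite-dimensional real Lie algebras.

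Next, $\mathsf{aff}(M,\nabla)$ is the Lie algebra of $\mathsf{Aff}(M,\nabla)_0 \cong Aut(P,\omega)_0$, hence also of the simply connected cover $\widehat{Aut}(P,\omega)_0$. By Remark \ref{R:affineatlas}, the target Lie algebra $\mathsf{aff}(\mathbb{R}^n)$ is the Lie algebra of $Aut(L(\mathbb{R}^n),\omega^0)$. Lie's third theorem (see \cite{Che}) then integrates $\eta_{u_0}$ uniquely to a Lie group homomorphism from $\widehat{Aut}(P,\omega)_0$ into the simply connected cover of $Aut(L(\mathbb{R}^n),\omega^0)$, and composition with the covering projection produces the desired $\rho$.

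The delicate point is recognizing that the right-hand side of \eqref{Eq:homomorphismeta} at $u_0$ is literally the bracket of $\mathbb{R}^n \rtimes \mathsf{gl}_n(\mathbb{R})$, so that $\eta_{u_0}$ lands in $\mathsf{aff}(\mathbb{R}^n)$ with its usual Lie bracket rather than in some more complicated space of sections. Changing the base point $u_0$ by a right translation $R_a$ modifies $\eta_{u_0}$ by an inner automorphism of $\mathsf{aff}(\mathbb{R}^n)$, since $\theta$ is tensorial of type $(GL_n(\mathbb{R}),\mathbb{R}^n)$ and $\omega$ transforms by $\mathsf{Ad}_{a^{-1}}$ under right translations; accordingly, $\rho$ is canonically defined up to an inner automorphism of the target, and is determined by $(\theta,\omega)$ as stated.
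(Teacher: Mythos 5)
Your proof is correct and takes essentially the same route as the paper: evaluating $(\theta,\omega)$ at a fixed frame turns the sheaf homomorphism of Theorem \ref{T:infinitesimalversion} into a finite-dimensional Lie algebra homomorphism $\mathsf{aff}(M,\nabla)\cong\mathfrak{a}_c(P)\longrightarrow\mathsf{aff}(\mathbb{R}^n)$, which is then integrated from the simply connected group $\widehat{Aut}(P,\omega)_0$ into $Aut(L(\mathbb{R}^n),\omega^0)$ via Cartan's theorem and Lie's third theorem. Your extra remarks on base-point dependence and on passing through the universal cover of the target are harmless refinements of the same argument.
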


Using techniques and results of  Douady A. and Lazard M. \cite{DL}, one can  prove the following.  

\begin{proposition} The fibre bundle in Lie algebras $P\times \mathfrak{a}_c(P)$ (respectively $P\times \mathsf{aff}(\mathbb{R}^n)$) is isomorphic to the Lie algebra of a Hausdorff Lie group $G$ over $P$ such that for any $u\in P$, the fiber over $u$ is $\widehat{Aut}(P,\omega^+)_0$ (respectively $\widehat{\mathsf{Aff}}(\mathbb{R}^n)_0$). If we want that the fiber over $u$ be $Aut(P,\omega)_0$ (respectively $\mathsf{Aff}(\mathbb{R}^n)_0$), the topology of $G$ is not necessarily  Hausdorff.
\end{proposition}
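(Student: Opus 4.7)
The plan is to apply the integration theorem of Douady and Lazard for bundles of Lie algebras to the trivial Lie algebra bundles $P\times\mathfrak{a}_c(P)$ and $P\times\mathsf{aff}(\mathbb{R}^n)$, and then identify the resulting Hausdorff Lie group bundle fibrewise.

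First I would recall precisely the Douady--Lazard construction: given a smooth bundle of finite-dimensional real Lie algebras $\mathfrak{g}\to P$, there exists a (not necessarily locally trivial) smooth bundle of Lie groups $\mathcal{G}\to P$, Hausdorff as a topological space, whose fibre over $u\in P$ is the connected and simply connected Lie group integrating $\mathfrak{g}_u$. The global construction proceeds by gluing, over a covering trivializing $\mathfrak{g}$, the local products $U_\alpha\times G_\alpha$ via transition functions obtained by integrating the Lie algebra cocycle (using Lie's third theorem and Cartan's theorem fibrewise, plus the functoriality of the integration). Applied to the two trivial Lie algebra bundles at hand, we get Hausdorff Lie group bundles over $P$ whose fibres are, respectively, the simply connected Lie groups integrating $\mathfrak{a}_c(P)\cong\mathsf{aff}(M,\nabla)$ and $\mathsf{aff}(\mathbb{R}^n)$.

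Next I would identify these simply connected integrating groups. Since $\mathsf{aff}(M,\nabla)$ is the Lie algebra of $\mathsf{Aff}(M,\nabla)\cong\mathrm{Aut}(P,\omega)$, its connected and simply connected integration is by definition $\widehat{\mathrm{Aut}}(P,\omega)_0$; similarly $\mathsf{aff}(\mathbb{R}^n)$ integrates to $\widehat{\mathsf{Aff}}(\mathbb{R}^n)_0$. This yields the isomorphism of Lie algebra bundles with $\mathrm{Lie}(\mathcal{G})$ and gives the first (Hausdorff) statement.

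For the second assertion, I would pass from the simply connected bundle to the bundle with fibre $\mathrm{Aut}(P,\omega)_0$ (respectively $\mathsf{Aff}(\mathbb{R}^n)_0$) by quotienting fibrewise by the kernel of the covering homomorphism, which is a discrete central subgroup $\pi_u\subset\widehat{\mathrm{Aut}}(P,\omega^+)_0$ isomorphic to the fundamental group. The point to emphasise, following Douady--Lazard, is that the union $\bigcup_{u\in P}\pi_u$ need not be a closed subset of the total space of $\mathcal{G}$: elements of the discrete kernel can have limits outside the kernel as $u$ varies, so the quotient topology fails to separate points. Hence Hausdorffness is only guaranteed in the simply connected case.

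The main obstacle, and the reason the Douady--Lazard machinery is indispensable, is precisely this last issue: one cannot simply define $G$ as a set-theoretic disjoint union of fibres and declare it a Lie group bundle, because even when the Lie algebra bundle is trivial as in our situation, the smooth and topological gluing of the integrating groups is subtle—only the simply connected integration is functorial enough to produce a Hausdorff global object. I would therefore spend the bulk of the proof quoting and verifying the hypotheses of the Douady--Lazard theorem (smoothness of the Lie algebra bundle, finite dimensionality of the fibre, paracompactness of the base $P$) rather than reconstructing the gluing, and then explain by a direct example (or by reference to \cite{DL}) why the non-simply connected quotient fails Hausdorffness in general.
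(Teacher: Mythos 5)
Your proposal follows essentially the same route the paper intends: the paper offers no detailed proof of this proposition, only the attribution to the techniques of Douady and Lazard \cite{DL}, which is precisely the integration theorem you invoke, with the same fibrewise identification of the simply connected integrations as $\widehat{Aut}(P,\omega)_0$ and $\widehat{\mathsf{Aff}}(\mathbb{R}^n)_0$ and the same Douady--Lazard-style explanation (the union of the discrete covering kernels need not be closed) for the possible failure of Hausdorffness when non-simply-connected fibres are required. This is consistent with the paper's (implicit) argument, so no further comparison is needed.
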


\begin{remark} Let $\Gamma$ be a flat affine connection on $P=L(M)$, then
\begin{enumerate}[(a)]
\item The set of vectors fields $\{B(e_1),\dots,B(e_n)\}$ determine a  horizontal submanifold of $P$ (relative to $\omega$). 
\item The connection $\Gamma$ is flat affine if and only if for every $x\in M$ there exists a coordinate system $(U;x_1,\dots,x_n)$ so that $\nabla_{\partial_i}\partial_j=0$.
\end{enumerate}
\end{remark}




The Lie algebra homomorphism $\eta$ of Theorem \ref{T:infinitesimalversion} could be integrated, in some particular cases, into a Lie group homomorphism. For instance using the affine development relative to $\nabla$ (\cite{E}, \cite{JLK} and for more details see \cite{Sh}). We have.

\begin{proposition} If $M$ is simply connected and $\Gamma$ is flat affine, there exists  a Lie group homomorphism $\rho:Aut(P,\omega)\longrightarrow Aut(L(\mathbb{R}^n),\omega^0)$  with derivative $\rho_{*,Id}=(\omega,\theta)$. 
\end{proposition}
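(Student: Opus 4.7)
The plan is to prove the proposition by constructing a developing map and showing that connection-preserving diffeomorphisms of $P$ descend to affine transformations of $\mathbb{R}^n$ through it, then identifying the derivative at the identity using the pullback properties of $(\theta,\omega)$.

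First I would fix a base point $m_0\in M$ and a linear frame $u_0\in P$ over $m_0$. Using Remark \ref{R:affineatlas}, the flat affine structure gives a maximal atlas on $M$ whose transition functions lie in $\mathsf{Aff}(\mathbb{R}^n,\nabla^0)$. Analytically continuing any chart around $m_0$ along curves from $m_0$ to an arbitrary point $m\in M$ produces an affine development, and since $M$ is simply connected this continuation is path-independent by the monodromy theorem. This yields a smooth local affine diffeomorphism $D:M\to\mathbb{R}^n$ with $D(m_0)=0$. Lifting $D$ by taking the Jacobian gives a principal bundle morphism $\widetilde{D}:P\to L(\mathbb{R}^n)$, equivariant under the right $GL_n(\mathbb{R})$-action, with $\widetilde{D}(u_0)=u^0$ (the standard frame at the origin), and satisfying $\widetilde{D}^{*}\omega^0=\omega$ and $\widetilde{D}^{*}\theta^0=\theta$ by construction.

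Next, for $f\in Aut(P,\omega)$ both $\widetilde{D}$ and $\widetilde{D}\circ f$ are principal bundle maps $P\to L(\mathbb{R}^n)$ pulling back $(\theta^0,\omega^0)$ to $(\theta,\omega)$. Since the absolute parallelism on $L(\mathbb{R}^n)$ built from $(\theta^0,\omega^0)$ is rigid and $M$ is connected, such a map is uniquely determined by its value on a single frame; hence there exists a unique $\rho(f)\in Aut(L(\mathbb{R}^n),\omega^0)$ with
\[
\widetilde{D}\circ f=\rho(f)\circ\widetilde{D}.
\]
Functoriality $\rho(f_1\circ f_2)=\rho(f_1)\circ\rho(f_2)$ is immediate from the defining equation, and smoothness of $\rho$ follows because $\rho(f)$ is completely determined by $\rho(f)(\widetilde{D}(u_0))=\widetilde{D}(f(u_0))$, which depends smoothly on $f$ in the topology of $Aut(P,\omega)$. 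Under the identification $Aut(L(\mathbb{R}^n),\omega^0)\cong\mathsf{Aff}(\mathbb{R}^n,\nabla^0)$ from Remark \ref{R:affineatlas}, this is the classical developing representation.

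Finally, to compute $\rho_{*,Id}$, let $Z\in\mathfrak{a}_c(P)$ have flow $\phi_t$. Differentiating the equivariance $\widetilde{D}\circ\phi_t=\rho(\phi_t)\circ\widetilde{D}$ at $t=0$ and evaluating at $u_0$ yields $\widetilde{D}_{*,u_0}Z_{u_0}=(\rho_{*,Id}(Z))_{u^0}$. Using $\widetilde{D}^{*}\theta^0=\theta$ and $\widetilde{D}^{*}\omega^0=\omega$, the right-hand side is the element of $\mathsf{aff}(\mathbb{R}^n)=\mathbb{R}^n\rtimes\mathfrak{gl}_n(\mathbb{R})$ with components $(\theta(Z_{u_0}),\omega(Z_{u_0}))$, which is precisely the restriction to $\mathfrak{a}_c(P)$ of the Lie algebra homomorphism $\eta$ of Theorem \ref{T:infinitesimalversion}. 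That this restriction \emph{is} a Lie algebra morphism into the finite-dimensional $\mathsf{aff}(\mathbb{R}^n)$ is exactly the content of Theorem \ref{T:infinitesimalversion}.

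The main obstacle will be the careful construction of the developing map $D$ and, above all, the verification of path-independence from simple connectedness: everything else in the proposition flows from uniqueness of $\widetilde{D}$ up to post-composition by an element of $Aut(L(\mathbb{R}^n),\omega^0)$. A secondary technical point is justifying that $\rho$ is smooth rather than merely abstractly a group homomorphism; here the finite-dimensional rigidity afforded by the absolute parallelism on $P$ (and hence on $L(\mathbb{R}^n)$) reduces continuity of $\rho$ to the smoothness of the evaluation $f\mapsto f(u_0)$, which is automatic in any Lie transformation group topology as explained before Remark \ref{R:affineatlas}.
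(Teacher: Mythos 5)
Your argument is correct and follows essentially the route the paper itself indicates: it integrates $\eta$ by means of the affine development relative to $\nabla$ (the Ehresmann--Koszul developing map, made path-independent by simple connectedness), lifted to the frame bundle so that $(\theta,\omega)$ is pulled back from $(\theta^0,\omega^0)$, with $\rho(f)$ obtained from the intertwining relation and the derivative identified with $\eta$ of Theorem \ref{T:infinitesimalversion}. The paper leaves these details to the cited references, so your write-up is simply a fleshed-out version of the same proof.
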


More particularly we have

\begin{proposition} If $M$ is connected and $\Gamma$ is geodesically complete, there is an isomorphism of Lie groups $\rho:Aut(P,\omega)\longrightarrow Aut(L(\mathbb{R}^n),\omega^0)$  with derivative $\rho_{*,Id}=(\omega,\theta)$. 
\end{proposition}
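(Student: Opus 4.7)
The plan is to promote the Lie group homomorphism $\rho$ constructed in the previous proposition to an isomorphism by exploiting geodesic completeness. The key input is the classical Auslander--Markus / Cartan--Ambrose--Hicks theorem: a simply connected, geodesically complete, flat affine manifold is affinely diffeomorphic to $(\mathbb{R}^n,\nabla^0)$ through its developing map $D:M\to\mathbb{R}^n$. Thus I would fix a frame $u_0\in P=L(M)$, form the associated developing map $D$ (global under completeness plus simple connectedness, which is in force here), and regard $(M,\nabla)$ as a concrete realization of $(\mathbb{R}^n,\nabla^0)$.

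Next, lift $D$ to a principal bundle diffeomorphism $L(D):L(M)\to L(\mathbb{R}^n)$, equivariant under the right $GL_n(\mathbb{R})$-action, defined on a frame $u=(m;X_1,\dots,X_n)$ by $L(D)(u):=(D(m);D_*X_1,\dots,D_*X_n)$. Because $D$ is affine, its differential interchanges the respective absolute parallelisms, so $L(D)^*\omega^0=\omega$ and $L(D)^*\theta^0=\theta$. Define
\[ \rho(f):=L(D)\circ f\circ L(D)^{-1},\qquad f\in Aut(P,\omega). \]
Since $L(D)$ is a diffeomorphism preserving both structural forms, $\rho$ is a group isomorphism $Aut(P,\omega)\to Aut(L(\mathbb{R}^n),\omega^0)$, smooth with smooth inverse $g\mapsto L(D)^{-1}\circ g\circ L(D)$, hence a Lie group isomorphism.

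To verify that $\rho_{*,Id}=(\omega,\theta)$, pick $Z\in\mathfrak{a}(P,\omega)$ with flow $f_t$. The flow of $\rho_{*,Id}(Z)$ is $\rho(f_t)$, whose infinitesimal generator on $L(\mathbb{R}^n)$ is $(L(D))_*Z$. Evaluating the parallelism $(\theta^0,\omega^0)$ on $(L(D))_*Z$ and using $L(D)^*(\omega^0,\theta^0)=(\omega,\theta)$ yields
\[ \bigl(\theta^0((L(D))_*Z),\,\omega^0((L(D))_*Z)\bigr)=\bigl(\theta(Z),\omega(Z)\bigr), \]
which gives the stated derivative via the identification $\mathfrak{a}(L(\mathbb{R}^n),\omega^0)\cong\mathsf{aff}(\mathbb{R}^n)$ furnished by Theorem \ref{T:infinitesimalversion}.

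The main obstacle is securing that the developing map is a global diffeomorphism: geodesic completeness makes $D$ a covering of $\mathbb{R}^n$, and simple connectedness of $\mathbb{R}^n$ together with that of $M$ (implicit here, as in the preceding proposition) forces $D$ to be a diffeomorphism. Without simple connectedness of $M$ the argument instead produces the isomorphism at the level of the universal cover, which then descends only after quotienting by a compatible deck action; this is the delicate point where the hypotheses must be read with care.
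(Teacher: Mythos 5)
Your route---develop, lift to the frame bundle, and conjugate by $L(D)$---is exactly the ``integration by affine development'' argument the paper has in mind (see the remark preceding the proposition and the proof of Theorem \ref{T:geodesicallycompletecase}, which also runs through Ehresmann's developing theorem), and in the simply connected complete case your argument is complete; conjugation by $L(D)$ has the added virtue of giving the derivative statement $\rho_{*,Id}=(\theta,\omega)$ essentially for free, since $L(D)$ intertwines the two parallelisms. The problem is that the proposition you were asked to prove assumes only that $M$ is \emph{connected}: simple connectedness is precisely the hypothesis that is dropped relative to the preceding proposition, with geodesic completeness put in its place. Your proof therefore establishes a different statement (the previous proposition strengthened by completeness), and your closing sentence names the missing step without supplying it.

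That missing step is not a routine refinement: the proposed descent ``by quotienting by a compatible deck action'' cannot, in general, produce an isomorphism of $Aut(P,\omega)$ with $Aut(L(\mathbb{R}^n),\omega^0)$. Completeness makes the developing map of the universal cover $\widehat{M}$ a diffeomorphism onto $\mathbb{R}^n$, so conjugation identifies $Aut(L(\widehat{M}),\widehat{\omega})$ with $Aut(L(\mathbb{R}^n),\omega^0)$; but only the automorphisms normalizing the affine holonomy group $H\subset\mathsf{Aff}(\mathbb{R}^n,\nabla^0)$ descend to $M$, and one must then pass to the quotient by $H$. For the flat torus $T^n=\mathbb{R}^n/\mathbb{Z}^n$, which is connected and complete, this gives $\mathsf{Aff}(T^n,\nabla)\cong T^n\rtimes GL_n(\mathbb{Z})$, of dimension $n$, whereas $Aut(L(\mathbb{R}^n),\omega^0)\cong\mathsf{Aff}(\mathbb{R}^n,\nabla^0)$ has dimension $n^2+n$; so no descent argument can close the gap, and the statement can only hold as written if simple connectedness (or triviality of the affine holonomy) is understood to be retained from the preceding proposition. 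Note that the paper's own closely related argument (proof of Theorem \ref{T:geodesicallycompletecase}) also works on $\widehat{M}$---injectivity of the Ehresmann homomorphism $A$ from completeness, surjectivity from a dimension count quoting Milnor---and it faces the same issue when passing from $\widehat{M}$ to $M$; so your proof matches the paper's intended mechanism, but neither your attempt nor the paper's sketch justifies the proposition under the bare hypothesis ``$M$ connected.''
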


\begin{remark} Notice that a local expression of the homomorphism $\rho$ of the previous proposition can be obtained by integration of the homomorphism of Lie algebras $\eta:\mathfrak{a}_c(P)\longrightarrow \mathsf{aff}(\mathbb{R}^n)$.
\end{remark}

\begin{example} Consider the manifold $M=\mathbb{R}^2\setminus\{(0,0),(0,1)\}$ endowed with the (no complete) connection $\nabla$  restriction of the usual connection of $\mathbb{R}^2$ to $M$ and let $G(\mathfrak{a}_c(P))$ be the connected and simply connected real Lie group of Lie algebra $\mathfrak{a}_c(P)$. A direct calculation shows the homomorphism $\eta$ of the previous remark integrates to the Lie group  homomorphism $$\begin{matrix}\rho:&G(\mathfrak{a}_c(P))&\longrightarrow& Aff(\mathbb{R}^2)\\ &F_{a,b}&\mapsto&\rho(F_{a,b})\end{matrix}$$  where $\rho(F_{a,b})$ is given by
\[  \frac{1}{D}\left[\begin{matrix} aX_{11}X_{22}-bX_{11}X_{12}-X_{12}X_{21}&(a-1)X_{12}X_{22}-bX_{12}^2&(a-1)xX_{22}-bxX_{12}\\(1-a)X_{11}X_{21}+bX_{11}^2& -aX_{12}X_{21}+bX_{11}X_{12}+X_{11}X_{22}&(1-a)xX_{21}+bxX_{11}\\0&0&D         \end{matrix}\right]  \] 
with $F_{a,b}$ as in Example \ref{Ex:calculationofaffinegroups}, $\left[\begin{matrix} X_{11} & X_{12}\\ X_{21}&X_{22}\end{matrix} \right]$ a system of local coordinates of $GL_2(\mathbb{R})$,   $D=X_{11}X_{22}-X_{12}X_{21}$ and $(x,y)$ local coordinates of $M$. 

\end{example}

%



Recall that $\omega$ is flat affine if and only if its affine holonomy group is discrete (see \cite{KN}). Nevertheless, there are flat affine manifolds whose linear holonomy group is not discrete. If $\omega$ is a complete flat affine  connection, its linear holonomy group is  discrete. 

There exists topological obstructions to the existence of a  flat affine connection (see  \cite{S}). In particular, a closed manifold with finite fundamental group does not admit flat affine structures (see \cite{Aus}).





If $M=G$ is a Lie group, we will identify its Lie algebra  Lie$(G)=\mathfrak{g}$ with the real vector space of left invariant vector fields on $G$ and also with the tangent space at the unit $\varepsilon\in G$. For any $x\in T_\varepsilon G$, we denote by $x^+$ (respectively by $x^-$) the left (right) invariant vector field determined by $x$.
The group  $G$ acts on itself on the left (respectively right). The left (respectively right) action of  $\sigma\in G$  will be denoted by $L_\sigma$ (respectively $R_\sigma$). These actions naturally lift to actions of $G$ on $P=L(G)$ given by $\psi_{1,\sigma}(u)=\sigma\cdot u= \left(\sigma\tau,(L_\sigma)_{*,\tau}(X_{ij})\right)$ (respectively   $\psi_{2,\sigma}(u)=u\cdot \sigma=(\tau\sigma,(R_\sigma)_{*,\tau}(X_{ij}))$
where $u=(\tau,(X_{ij}))$. A linear connection $\Gamma$ on $G$ is said left invariant  (respectively right invariant) if the action $\psi_1$ (respectively  $\psi_2$) preserves the  horizontal distribution. The connection is bi-invariant if the  horizontal distribution is preserved by  $\psi_1$ and $\psi_2$. 

If $\nabla=\nabla^+$ is left invariant, it is well known that $(G,\nabla^+)$ is flat affine if and only if the product $x\cdot y=\nabla^+_{x^+}y^+$ turns $\mathfrak{g}$ into a left symmetric algebra (see for instance \cite{M}).

The existence of a left invariant flat affine connection on a Lie group $G$, is equivalent to the existence of an \'etale affine representation of $G$. This means, that there exists a homomorphism of Lie groups $\tau:G\longrightarrow \mathsf{Aff}(\mathbb{R}^n)$ with an open orbit and discrete isotropy. 


In these terms we get the following consequence of Theorem \ref{T:infinitesimalversion}  and Corollary \ref{C:integrationofeta}. 
 
\begin{remark} It is clear that having a left invariant connection on a Lie group $G$ is equivalent to have a bilinear product on  $Lie(G)$. In what follows we can suppose, if necessary, the Lie group $G$  simply connected.
\end{remark}
 
\begin{theorem} \label{T:rhointheLiegroupscase} Let $G$ be a connected Lie group endowed with a left invariant connection $\nabla^+$ of connection form $\omega^+$. The following assertions are equivalent
\begin{enumerate}	\item[1] The connection $\omega^+$ is flat affine.
\item[2] There exists a unique Lie group homomorphism  $\rho:\widehat{Aut}(L(G),\omega^+)_0\longrightarrow Aut(L(\mathbb{R}^n),\omega^0)$ with derivative  $\rho_{*,Id}=(\theta,\omega)$. 
\item[3] There exists an affine \'etale representation $\rho':G^{op}\longrightarrow\mathsf{Aff}(\mathbb{R}^n,\nabla^0)$. 
\item[4] The Lie group $G^{opp}$, opposite of $G$, acts on $(G,\nabla^+)$ by affine transformations  via the map $\sigma\mapsto\sigma^{-1}.$ 
\end{enumerate}  \end{theorem}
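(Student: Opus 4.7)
The plan is to prove the chain of equivalences by leveraging the results already established in the section, organized as $(1) \Leftrightarrow (2)$, then $(1) \Leftrightarrow (3)$, and finally $(3) \Leftrightarrow (4)$.

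For $(1) \Leftrightarrow (2)$, I would directly apply Corollary \ref{C:integrationofeta} to the flat affine manifold $(G,\nabla^+)$: the existence of the Lie group homomorphism $\rho$ with derivative $(\theta,\omega)$ at the identity is exactly its content, and uniqueness follows because a homomorphism from a connected, simply connected Lie group is completely determined by its derivative at the identity. For the converse, differentiating $\rho$ at the identity returns the map $(\theta,\omega)$ as a Lie algebra homomorphism between the relevant sheaves of Lie algebras, and Theorem \ref{T:infinitesimalversion} then forces $\omega^+$ to be flat affine.

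For $(1) \Leftrightarrow (3)$, I would invoke the equivalence, recalled just before the theorem, between flat affine left invariant connections on $G$ and left symmetric algebra structures on $\mathfrak{g}$ compatible with its Lie bracket. Given the product $x \cdot y = \nabla^+_{x^+} y^+$, the map $x \mapsto (x, L_x)$, with $L_x(y) = x \cdot y$, is a Lie algebra homomorphism $\mathfrak{g} \to \mathsf{aff}(\mathbb{R}^n)$ after a linear identification $\mathfrak{g} \cong \mathbb{R}^n$; the left symmetric identity is precisely what makes $L$ into a Lie algebra representation. Integrating via Lie's third theorem and examining the orbit of the origin yields the sought étale affine representation, with $G^{op}$ appearing after tracking the sign conventions between left-invariant vector fields and the integration. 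The converse is obtained by linearizing $\rho'$ at the identity to recover the left symmetric product on $\mathfrak{g}$ and hence the connection $\nabla^+$.

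For $(3) \Leftrightarrow (4)$, I would use the orbit map $\sigma \mapsto \rho'(\sigma)(0)$ (the developing map) associated to the étale representation to identify a neighborhood of the identity in $G$ with an open subset of $\mathbb{R}^n$ in an affine-preserving manner; under this identification, the affine action of $G^{op}$ on the orbit transports to the affine action of $G^{op}$ on $(G,\nabla^+)$ prescribed by $\sigma \mapsto \sigma^{-1}$, and a direct computation verifies this explicit form. Conversely, any such affine $G^{op}$-action composed with a local affine chart around the identity produces an étale affine representation of $G^{op}$ by the same developing argument. The main obstacle throughout is the careful management of the $G$ versus $G^{op}$ bookkeeping: both the direction of integration from Lie algebras to Lie groups and the precise interpretation of the inversion map in (4) are points where sign conventions easily cause errors, and these must be aligned with the conventions implicit in the earlier definitions of $\omega^+$ and of the left symmetric product.
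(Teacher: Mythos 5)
Your step $(1)\Leftrightarrow(2)$ contains the main gap, in the direction $(2)\Rightarrow(1)$. Theorem \ref{T:infinitesimalversion} characterizes flatness by the requirement that $\eta=(\theta,\omega)$ be a homomorphism of \emph{sheaves} of Lie algebras, i.e.\ that \eqref{Eq:homomorphismeta} hold for arbitrary local sections $Z_1,Z_2$ of $TP$ --- in particular for arbitrary basic/horizontal fields, which in general are not infinitesimal affine transformations. Differentiating the homomorphism $\rho$ of assertion (2) at the identity only tells you that $Z\mapsto(\theta_u(Z_u),\omega_u(Z_u))$ is a Lie algebra homomorphism on $\mathrm{Lie}(\widehat{Aut}(L(G),\omega^+)_0)\cong\mathfrak{a}_c(P)\cong\mathsf{aff}(G,\nabla^+)$, a finite-dimensional subalgebra of $Sec(TP)$; this is far weaker than the hypothesis of Theorem \ref{T:infinitesimalversion} and does not force the curvature and torsion to vanish (when $Aut(P,\omega)_0$ is small the condition carries almost no information). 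The paper never argues $(2)\Rightarrow(1)$ directly: it proves $(1)\Rightarrow(2)$ essentially as you do, then gets $(2)\Rightarrow(3)$ by restricting the derivative further to the lifts of right-invariant vector fields --- which are complete infinitesimal affine transformations of \emph{any} left invariant connection, and on which $\theta_u$ is surjective onto $\mathbb{R}^n$, giving the open orbit and hence the \'etale representation of $G^{op}$ --- and finally closes the loop by citing Medina/Koszul for $(3)\Rightarrow(1)$. To repair your argument you should either follow that route or supply an independent proof that the homomorphism property on $\mathfrak{a}_c(P)$ alone, together with left invariance, kills curvature and torsion; your proposal does not.

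Two further points. Your $(1)\Rightarrow(3)$ via $x\mapsto(x,L_x)$ on the left symmetric algebra is the classical construction and is a legitimate, more algebraic alternative to the paper's detour through $\mathfrak{a}_c(P)$ (modulo passing to the universal cover, which the paper's preceding remark permits); but your converse ``linearize $\rho'$ and recover $\nabla^+$'' only works if $\rho'$ is the representation attached to $\nabla^+$ --- an arbitrary \'etale representation yields \emph{some} flat affine left invariant connection, not necessarily the given one (the paper shares this looseness when it cites \cite{M}). Finally, for (4) note that the paper's proof is silent; your developing-map sketch is only local and, more importantly, never pins down which action of $G^{op}$ is meant: under the naive reading $\sigma\mapsto L_{\sigma^{-1}}$ the assertion holds for every left invariant connection (left translations are always affine), so it cannot characterize flatness, while the reading by right translations fails even in the flat case (in Example \ref{Ex:productoR1} the left invariant fields are not infinitesimal affine transformations). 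Making $(3)\Leftrightarrow(4)$ meaningful requires specifying the action through the \'etale representation and verifying affinity globally, not merely on a neighborhood of the identity.
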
\begin{proof}
Suppose that $\Gamma$ is flat affine.  From Theorem \ref{T:infinitesimalversion}, for $u\in L(G)$ fixed, there exists a hommomorphism of Lie algebras $\eta':\mathfrak{X}(P)\longrightarrow aff(\mathbb{R}^n)$ definde by $\eta'(Z)=(\theta_u(Z),\omega_u(Z_u))$. Hence, the restriction $\eta'':=\eta'/_{\mathfrak{a}_c(P)}:\mathfrak{a}_c(P)\longrightarrow aff(\mathbb{R}^n)$ of $\eta'$ to $\mathfrak{a}_c(P)$,  is a Lie algebra homomorphism. Using Cartan's Theorem and Lie's third theorem, we get a  Lie group homomorphism  $\rho:\widehat{Aut}(L(G),\omega^+)_0\longrightarrow Aff(\mathbb{R}^n)$. Also notice that $\{\eta''(Z)(0)\mid Z\in\mathfrak{a}_c(P)\}=\mathbb{R}^n$, consequently $\Lambda$ admits an open orbit.

Now suppose that there is a Lie group homomorphism $\rho:\widehat{Aut}(L(G),\omega^+)_0\longrightarrow Aut(L(\mathbb{R}^n),\omega^0)$ with a point of open orbit so that $\rho_{*,Id}=(\theta,\omega)$. That is $\rho_{*,Id}=\eta''$ as above. As right invariant vector fields on $G$ are infinitesimal affine transformations, we have that $\mathfrak{g}^{op}$ can be considered as a Lie subalgebra of $\mathfrak{a}_c(L(G))$, where $\mathfrak{g}=$Lie$(G)$. Therefore the restriction $\eta'''=\eta''/_{\mathfrak{g}^{op}}:\mathfrak{g}^{op}\longrightarrow aff(\mathbb{R}^n)$ is also a Lie algebras homomorphism. Using Cartan's Theorem and Lie's third theorem we get a Lie group homomorphism $\rho':G^{op}\longrightarrow  Aff(\mathbb{R}^n)$. 

For $x\in T_\epsilon G$, denote $x^-$ the right invariant vector field determined by $x$ and by $L(x^-)$ its natural lift to $L(G)$. We conclude that $\rho'$ defines an affine \'etale action of $G^{op}$ on $\mathbb{R}^n$, by noticing that  $\{\eta'''(L(x^-))(0)\mid x\in T_\epsilon G\}=\mathbb{R}^n$.

The statement 3 implies 1 is proved in \cite{M} (see also \cite{JLK}). 
\end{proof}


\section{The associative envelope of a finite dimensional left symmetric algebra} \label{S:associativeenvelope}

Given a flat affine manifold $(M,\nabla)$, we show the existence of a simply connected  Lie group endowed with a flat affine bi-invariant connection whose Lie algebra contains $\mathfrak{a}_c(M,\nabla)$ as a Lie subalgebra (see Theorem \ref{T:envelopeofaflataffinemanifold}). We specialize this result to the case where $M=G$ is a flat affine Lie group. In this special case we show that the Lie algebra of the simply connected  Lie group $Env(G,\nabla^+)$   contains the Lie algebra of $G$ (see Theorem \ref{T:existenceofenvelope}).

For this purpose we recall some known facts and introduce some notation. Let $M$ be an $n$ dimensional manifold with a linear connection $\Gamma$ and corresponding covariant derivative   $\nabla$, consider the product \begin{equation}\label{Eq:productinducedbyaconnection}XY:=\nabla_XY,\quad\text{for}\quad X,Y\in \mathfrak{X}(M).\end{equation} 
If the curvature  tensor relative to $\nabla$ is identically zero, this product verifies the condition
\[ [X,Y]Z=X(YZ)-Y(XZ), \quad\text{for all}\quad X,Y,Z\in \mathfrak{X}(M).\]
If both torsion and curvature vanish identically we have
\begin{equation} \label{Eq:leftsymmetricproduct} (XY)Z-X(YZ)=(YX)Z-Y(XZ), \quad\text{for all}\quad X,Y,Z\in \mathfrak{X}(M).\end{equation} 
A vector space endowed with a bilinear product satisfying Equation \eqref{Eq:leftsymmetricproduct} is called a left symmetric algebra. In this case the product given by $[X,Y]_1:=XY-YX$ defines a Lie bracket on the space. If $\nabla$ is torsion free, this Lie bracket agrees with the usual Lie bracket of $\mathfrak{X}(M)$, that is \begin{equation}\label{equliebrakettorsionfree}
	[X,Y]=\nabla_XY-\nabla_YX
\end{equation}
Recall that product \eqref{Eq:productinducedbyaconnection} satisfies $(fX)Y=f(XY)$ and $X(gY)=X(g)Y+g(XY)$, for all $f,g\in C^\infty(M,\mathbb{R})$, i.e., the product defined above is $\mathbb{R}$-bilinear and $C^\infty(M,\mathbb{R})$-linear in the first component. 

 In what follows, given an associative or a left symmetric algebra $(\mathcal{A},\cdot)$, we will denote by $\mathcal{A}_-$ the Lie algebra of commutators of $\mathcal{A}$, i.e., the Lie algebra with bracket given by \[ [a,b]=a\cdot b-b\cdot a.\]
The next result may be already known by some experts on the area, see \cite{T}.

\begin{lemma}\label{L:associativity} Let $(M,\nabla)$ be an $n$-dimensional flat affine manifold. Then Product \eqref{Eq:productinducedbyaconnection} induces a natural structure of associative algebra, of dimension less than or equal to $n^2+n$, on the vector space $\mathfrak{a}(M,\nabla)$  whose commutator is the Lie bracket of vector fields on $M$.  In particular if $M=G$ is a Lie group and $\nabla=\nabla^+$ is left invariant, $\mathfrak{a}(G,\nabla^+)_-$   contains $\mathfrak{g}^{opp}$  as a  subalgebra, where $\mathfrak{g}^{opp}$ is the opposite Lie algebra of $\mathfrak{g}=Lie(G)$.
\end{lemma}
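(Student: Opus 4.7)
The plan is to reformulate the condition $X \in \mathfrak{a}(M,\nabla)$ as a vanishing-associator condition on the left-symmetric product $XY := \nabla_X Y$, and then obtain closure, associativity, and the identification of the commutator as formal consequences of that reformulation.

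\textbf{Step 1 (algebraic reformulation).} Unfolding Equation \eqref{Eq:ecuacionkobayashi} in product notation gives the identity
\[ [X,YZ] = [X,Y]\,Z + Y\,[X,Z]. \]
Expanding all brackets as $[A,B] = AB - BA$ (valid since $\nabla$ is torsion free) and invoking the left-symmetry relation \eqref{Eq:leftsymmetricproduct} to cancel four of the six resulting summands, one obtains
\[ X \in \mathfrak{a}(M,\nabla) \iff (YZ)\,X = Y\,(ZX) \quad \text{for all } Y,Z \in \mathfrak{X}(M), \]
i.e.\ $\mathfrak{a}(M,\nabla)$ is precisely the right nucleus of the left-symmetric algebra $(\mathfrak{X}(M),\cdot)$.

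\textbf{Step 2 (closure, associativity, commutator).} Given $X,Y \in \mathfrak{a}(M,\nabla)$ and arbitrary $A,B \in \mathfrak{X}(M)$, the associator $(P,Q,\cdot)$ vanishes whenever $X$ or $Y$ occupies the third slot, so one may shift parentheses freely:
\[ (AB)(XY) \;\stackrel{(Y)}{=}\; ((AB)X)\,Y \;\stackrel{(X)}{=}\; (A(BX))\,Y \;\stackrel{(Y)}{=}\; A((BX)Y) \;\stackrel{(Y)}{=}\; A(B(XY)), \]
the labels indicating which nucleus identity is used at each step. Hence $(A,B,XY)=0$ and $XY \in \mathfrak{a}(M,\nabla)$, giving closure. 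Associativity on $\mathfrak{a}(M,\nabla)$ is then immediate: for any $X,Y,Z \in \mathfrak{a}(M,\nabla)$, membership of $Z$ alone forces $(XY)Z = X(YZ)$. Finally, $XY - YX = \nabla_X Y - \nabla_Y X = [X,Y]$ by torsion freeness, so the associative commutator coincides with the usual Lie bracket of vector fields.

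\textbf{Step 3 (dimension bound and Lie group case).} For the dimension bound I would use the isomorphism $X \mapsto L(X)$ from $\mathfrak{a}(M,\nabla)$ onto $\mathfrak{a}(P,\omega)$ recalled in the introduction: every $Z \in \mathfrak{a}(P,\omega)$ infinitesimally preserves the absolute parallelism $\{B(e_i),E_{jj}^*\}$ of $P$, so its flow generates a one-parameter subgroup of the group $K$ of parallelism-preserving diffeomorphisms; by the statement just following Theorem \ref{T:infinitesimalversion}, $K$ is a Lie group of dimension at most $\dim P = n^2+n$, and the bound follows. For the Lie group assertion, right translations on $G$ commute with all left translations and therefore preserve the space of left-invariant vector fields; hence they preserve $\nabla^+$ itself, and each right-invariant vector field $x^-$, $x\in\mathfrak{g}$, lies in $\mathfrak{a}(G,\nabla^+)$. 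Since $x \mapsto x^-$ is a Lie algebra anti-homomorphism, its image is a Lie subalgebra of $\mathfrak{a}(G,\nabla^+)_-$ isomorphic to $\mathfrak{g}^{opp}$.

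The main obstacle is the bookkeeping in Step 1: one must verify that, after substituting $XY = \nabla_X Y$ into \eqref{Eq:ecuacionkobayashi}, the left-symmetry and torsion-free identities eliminate everything except the clean right-nucleus equation $(YZ)X = Y(ZX)$. Once that identification is in hand, closure and associativity reduce to the formal associator manipulation of Step 2, and the dimension bound together with the Lie group statement become immediate consequences of standard facts about the parallelism on $L(M)$ and about right translations on $G$.
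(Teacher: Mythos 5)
Your Steps 1 and 2 are correct and are essentially the paper's own argument: the reformulation $X\in\mathfrak{a}(M,\nabla)\iff (YZ)X=Y(ZX)$ for all $Y,Z\in\mathfrak{X}(M)$ is exactly Equation \eqref{Eq:infinitesimalaffinetransformationsonflataffineconnections}, and your right-nucleus manipulation simply makes explicit the closure and associativity assertions that the paper states without detail; the identification of the commutator with the bracket via torsion-freeness is also the paper's step.

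The genuine problem is in your justification of the Lie-group part of the statement. Right translations do preserve the space of left-invariant vector fields (they act by $x^+\mapsto(\mathrm{Ad}(h^{-1})x)^+$), but the inference ``hence they preserve $\nabla^+$ itself'' is false: that would require each $\mathrm{Ad}(h^{-1})$ to be an automorphism of the left-symmetric product, i.e.\ it would make $\nabla^+$ bi-invariant, which by Theorem \ref{T:associativeimplybiinvariantstructures} forces the product to be associative. The paper's own Example \ref{Ex:productoR1} gives a counterexample: there $(e_2\cdot e_2)\cdot e_1=2e_1$ while $e_2\cdot(e_2\cdot e_1)=0$, so that left-invariant flat affine connection is not right-invariant. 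You have the flows interchanged: the flow of the right-invariant field $x^-$ is $t\mapsto L_{\exp(tx)}$, i.e.\ by \emph{left} translations, and these are affine precisely because $\nabla^+$ is left-invariant; that one-line observation is the correct (and the paper's) reason why $x^-\in\mathsf{aff}(G,\nabla^+)\subset\mathfrak{a}(G,\nabla^+)$, after which your remark that $x\mapsto x^-$ is an anti-homomorphism gives $\mathfrak{g}^{opp}$ as a subalgebra of $\mathfrak{a}(G,\nabla^+)_-$. A secondary imprecision in Step 3: elements of $\mathfrak{a}(P,\omega)$ need not be complete, so they need not generate one-parameter subgroups of $K$; the standard argument for the bound is that a vector field commuting with the $n^2+n$ parallelizing fields of $P$ is determined by its value at one point, so evaluation at a point is injective and $\dim\mathfrak{a}(M,\nabla)\le n^2+n$.
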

\begin{proof}
Since $\nabla$ is flat affine, it follows from \eqref{Eq:ecuacionkobayashi} that a smooth vector field $X$ is an infinitesimal affine transformation if and only if 
\begin{equation} \label{Eq:infinitesimalaffinetransformationsonflataffineconnections} \nabla_{\nabla_YZ}X=\nabla_Y\nabla_ZX,\end{equation}
for all $Y,Z\in\mathfrak{X}(M)$. This equality  implies that $\nabla_XY\in\mathfrak{a}(M,\nabla),$ whenever $X,Y\in \mathfrak{a}(M,\nabla)$. It follows from \eqref{Eq:infinitesimalaffinetransformationsonflataffineconnections} that the product $X Y=\nabla_XY$ is an associative product on $\mathfrak{a}(M,\nabla).$ Moreover, from \eqref{equliebrakettorsionfree}  the commutator of $\mathfrak{a}(M,\nabla)$ is the Lie bracket of vector fields on $M$. 
	
In particular if $M=G$ is a Lie group of Lie algebra $\mathfrak{g}$ and $\nabla^+$ is left invariant and flat affine,  the real vector space $\mathfrak{g}$ of right invariant vector fields on $G$ is a subspace of $\mathsf{aff}(G,\nabla^+)$. Hence from \eqref{equliebrakettorsionfree} we get  that $\mathfrak{g}^{op}$ is a Lie subalgebra of  $\mathfrak{a}(G,\nabla^+)_-$. 
\end{proof}


The following result is useful in what follows (see for instance \cite{M} or  \cite{BM}). 

\begin{theorem}\label{T:associativeimplybiinvariantstructures}  Let $G$ be a Lie group and $\mathfrak{g}$ its Lie algebra. The group $G$ admits a flat affine bi-invariant structure if and only if  $\mathfrak{g}$ is the underlying Lie algebra of an  associative algebra so that $[a,b]=ab-ba$, for all $a,b\in\mathfrak{g}$. 
\end{theorem}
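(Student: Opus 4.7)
The plan is to translate bi-invariance of a left invariant flat affine connection into a purely algebraic condition on the associated left symmetric product, and then to show by direct manipulation of associators that this condition, together with the LSA identity, is equivalent to associativity.

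Recall that a left invariant linear connection $\nabla^+$ on $G$ is determined by the bilinear product $x\cdot y := (\nabla^+_{x^+} y^+)(\varepsilon)$ on $\mathfrak{g}$, and that $\nabla^+$ is flat affine iff $(\mathfrak{g},\cdot)$ is left symmetric with $[x,y]=x\cdot y - y\cdot x$. Using the identity $(R_\sigma)_* x^+ = (\mathrm{Ad}_{\sigma^{-1}} x)^+$, which follows from the commutativity of left and right translations, I would first check that right invariance of $\nabla^+$ is equivalent to $\mathrm{Ad}$-invariance of $\cdot$, and, infinitesimally (assuming $G$ connected, which we may, by passing to the identity component if necessary), to the condition that $\mathrm{ad}_z$ is a derivation of $\cdot$ for every $z \in \mathfrak{g}$:
\[
[z,x\cdot y] = [z,x]\cdot y + x\cdot [z,y].
\]

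For the direction $(\Leftarrow)$, assume $(\mathfrak{g},\cdot)$ is associative with commutator equal to the bracket. Associativity trivially implies left symmetry, so the left invariant connection determined by $\cdot$ is flat affine. A short direct expansion, substituting $[a,b]=ab-ba$ and collapsing the six resulting triple products via associativity, verifies the derivation identity displayed above; hence the connection is also right invariant, i.e.\ bi-invariant. For the direction $(\Rightarrow)$, suppose $\nabla$ is flat affine and bi-invariant; then the associated product $\cdot$ is LSA, compatible with the bracket, and $\mathrm{ad}_z$ acts as a derivation. Writing $A(x,y,z) := (x\cdot y)\cdot z - x\cdot(y\cdot z)$ for the associator, left symmetry reads $A(x,y,z) = A(y,x,z)$. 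Expanding the derivation identity, substituting $[a,b] = ab - ba$ throughout, and regrouping the resulting six triple products into associators yields a linear identity expressing one permutation of the associator as a sum of two others. Combined with the LSA symmetry $A(x,y,z) = A(y,x,z)$, this identity forces $A \equiv 0$, proving associativity.

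The main technical obstacle is this final associator manipulation: the bookkeeping of signs and permutations in the six triple products is easy to bungle, but once the regrouping is performed correctly the LSA identity collapses everything in a single step. Neither direction requires any integration or global analysis beyond the passage from $\mathrm{Ad}$-invariance to its infinitesimal form, which is where the connectedness hypothesis is used.
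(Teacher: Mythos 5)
Your proof is correct; note, however, that the paper does not prove this theorem at all but quotes it from \cite{M} and \cite{BM}, and your argument is essentially the standard one found in those references. Both pillars of your outline check out: (i) since $(R_\sigma)_*x^+=(\mathrm{Ad}_{\sigma^{-1}}x)^+$, right invariance of the left invariant connection $\nabla^+_{x^+}y^+=(x\cdot y)^+$ is equivalent to every $\mathrm{Ad}_\sigma$ being an automorphism of the product, hence infinitesimally to every $\mathrm{ad}_z$ being a derivation; (ii) with $A(x,y,z)=(x\cdot y)\cdot z-x\cdot(y\cdot z)$, the derivation identity expands to $z(xy)-(xy)z=(zx)y-(xz)y+x(zy)-x(yz)$, which regroups as $-A(z,x,y)+A(x,z,y)-A(x,y,z)=0$, so left symmetry $A(x,z,y)=A(z,x,y)$ forces $A\equiv 0$, while conversely associativity yields both left symmetry and the derivation identity by a one-line computation. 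The only point to phrase more carefully is connectedness: the infinitesimal derivation condition integrates to $\mathrm{Ad}(G)$-invariance only when $G$ is connected, so in the direction that constructs the bi-invariant structure your ``pass to the identity component'' remark really amounts to reading the theorem for connected $G$ (which is the paper's standing convention and the setting of the cited references); for a disconnected group your argument only gives invariance under right translations by elements of $G_0$.
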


We have the following consequences of Lemma \ref{L:associativity}.

\begin{theorem}\label{T:envelopeofaflataffinemanifold} Given a flat affine manifold $(M,\nabla)$ there exists a  connected   Lie group $G$ endowed with a flat affine  bi-invariant structure containing a connected Lie subgroup $H$ locally isomorphic to $\mathsf{Aff}(M,\nabla)$.
\end{theorem}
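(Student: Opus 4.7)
The strategy is to build $G$ from the associative algebra provided by Lemma \ref{L:associativity} via Theorem \ref{T:associativeimplybiinvariantstructures}, and then locate $H$ inside $G$ by integrating the Lie subalgebra $\mathsf{aff}(M,\nabla)\subset\mathfrak{a}(M,\nabla)_-$.

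First, I would invoke Lemma \ref{L:associativity} to endow the finite dimensional real vector space $\mathfrak{a}(M,\nabla)$ with the associative product $X\cdot Y:=\nabla_X Y$. By the same lemma, the associated Lie algebra of commutators $\mathfrak{a}(M,\nabla)_-$ coincides with the usual Lie bracket of vector fields on $M$ restricted to infinitesimal affine transformations. In particular, since $\mathsf{aff}(M,\nabla)$ is, by definition, a Lie subalgebra of $\mathfrak{a}(M,\nabla)$ (under the vector field bracket) that consists of the complete elements, it is also a Lie subalgebra of $\mathfrak{a}(M,\nabla)_-$.

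Next, apply Lie's third theorem to the finite dimensional real Lie algebra $\mathfrak{a}(M,\nabla)_-$: let $G$ be the connected, simply connected Lie group whose Lie algebra is $\mathfrak{a}(M,\nabla)_-$. Since this Lie algebra is the commutator algebra of the associative algebra $(\mathfrak{a}(M,\nabla),\cdot)$, Theorem \ref{T:associativeimplybiinvariantstructures} applies and produces a flat affine bi-invariant connection on $G$. This yields the desired flat affine bi-invariant Lie group.

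Finally, to obtain $H$, I would invoke the standard correspondence between Lie subalgebras and connected (immersed, possibly non-closed) Lie subgroups of a simply connected Lie group: the Lie subalgebra $\mathsf{aff}(M,\nabla)\subset\mathfrak{a}(M,\nabla)_-$ integrates to a unique connected Lie subgroup $H\subset G$. Because $\mathsf{aff}(M,\nabla)$ is precisely the Lie algebra of $\mathsf{Aff}(M,\nabla)$ (as recalled in the introduction, following \cite{KN}), the two connected Lie groups $H$ and the identity component of $\mathsf{Aff}(M,\nabla)$ share the same Lie algebra, hence are locally isomorphic.

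The only subtlety worth flagging is that $H$ need not be closed in $G$ and $\mathsf{Aff}(M,\nabla)$ need not be simply connected, but the statement only asserts local isomorphism, so this causes no trouble. The genuinely nontrivial ingredients are Lemma \ref{L:associativity} (which supplies the associative structure) and Theorem \ref{T:associativeimplybiinvariantstructures} (which converts that associative structure into a bi-invariant flat affine connection); with both in hand, the construction of $G$ and $H$ is essentially a direct application of Lie's third theorem together with the subalgebra–subgroup correspondence.
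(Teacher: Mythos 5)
Your proposal is correct and follows essentially the same route as the paper: Lemma \ref{L:associativity} supplies the associative structure on $\mathfrak{a}(M,\nabla)$, the associativity is converted into a flat affine bi-invariant structure on a connected group integrating the commutator Lie algebra, and $H$ is obtained by integrating the subalgebra $\mathsf{aff}(M,\nabla)=\mathfrak{a}_c(M,\nabla)$, which is the Lie algebra of $\mathsf{Aff}(M,\nabla)$. The only (cosmetic) difference is that you invoke Lie's third theorem and Theorem \ref{T:associativeimplybiinvariantstructures} abstractly for all of $\mathfrak{a}(M,\nabla)_-$, whereas the paper works with the associative subalgebra $E$ generated by $\mathsf{aff}(M,\nabla)$ and realizes $G$ concretely as the identity component of $\{1+a\mid a\in E\}$ inside the group of units of the unital hull $E\oplus\mathbb{R}1$, following \cite{BM}.
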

\begin{proof} From Lemma \ref{L:associativity}, the real vector space $\mathfrak{a}(M,\nabla)$ is an associative algebra under the product determined by $\nabla$. Let $E$ be the smallest associative subalgebra of $\mathfrak{a}(M,\nabla)$ containing $\mathsf{aff}(M,\nabla)$ and $\widehat{\mathcal{A}}=E\oplus\mathbb{R}1$ the associative algebra obtained from $E$ by adjoining a unit element $1$. Consider $U(\widehat{\mathcal{A}})$ the group of units of $\widehat{\mathcal{A}}$, this is open and dense in $\widehat{\mathcal{A}}$. Let 
$$\overline{G}:=\left\{u\in U(\widehat{\mathcal{A}})\mid u=1+a,\ \text{with}\ a\in E  \right\}$$
and $G=\overline{G}_0$ the connected component of the unit of $G$. Then the Lie group $G$ verifies the conditions of the theorem (for more details see \cite{BM}).

As $\nabla$ is flat affine, the space $\mathfrak{a}_c(M,\nabla)$ is a Lie subalgebra of $E_-$. Consequently there exists a connected Lie subgroup $H$ of $G$ of Lie algebra $\mathfrak{a}_c(M,\nabla)$. On the other hand, $\mathfrak{a}_c(M,\nabla)$ is also the Lie algebra of $\mathsf{Aff}(M,\nabla)$, hence the groups $H$ and $\mathsf{Aff}(M,\nabla)$ are locally isomorphic. 
\end{proof}
Let $\mathcal{A}^{op}$ be the opposite algebra of a finite dimensional associative algebra $\mathcal{A}$. We have

\begin{theorem} \label{T:existenceofenvelope}
If $(\mathfrak{g},\cdot)$ is an $n$-dimensional left symmetric algebra over $\mathbb{K}=\mathbb{R}$ or $\mathbb{C}$, then there exists:
\begin{enumerate}
\item[1.] A unique connected and  simply connected flat affine Lie group $(G,\nabla^+)$ whose Lie algebra is $Lie(G)=\mathfrak{g}_-$, the Lie algebra  of commutators of $\mathfrak{g}$.
\item[2.] An associative algebra $\mathcal{A}$ so that $\mathfrak{g}_-$ is a Lie subalgebra  $\mathcal{A}_- $, the Lie algebra of commutators of $\mathcal{A}$. 	Moreover $\mathcal{A}^{op}$ is the subalgebra  of the associative algebra $\mathfrak{a}(G,\nabla^+)$ generated by $\mathsf{aff}(G,\nabla^+)$.
	\end{enumerate}
\end{theorem}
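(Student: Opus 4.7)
First, for Part 1, I plan to invoke Lie's third theorem. Given the left symmetric algebra $(\mathfrak{g},\cdot)$, the bracket $[x,y]:=x\cdot y-y\cdot x$ turns $\mathfrak{g}$ into a finite dimensional Lie algebra $\mathfrak{g}_-$ (the left symmetric axiom is precisely what is needed to verify Jacobi for this commutator). By Lie's third theorem there exists a unique connected and simply connected Lie group $G$ with $\mathrm{Lie}(G)=\mathfrak{g}_-$. Next I would define a left invariant linear connection on $G$ by setting $\nabla^+_{x^+}y^+:=(x\cdot y)^+$ on left invariant vector fields and extending $C^\infty(G,\mathbb{K})$-linearly in the first slot and as a derivation in the second. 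Using the equivalence recalled in the paper just before Theorem \ref{T:rhointheLiegroupscase} (a left invariant connection is flat affine if and only if the induced product on $\mathfrak{g}$ is left symmetric), the left symmetry of $\cdot$ delivers both vanishing curvature and vanishing torsion, so $(G,\nabla^+)$ is flat affine. Uniqueness of the pair follows from uniqueness of the simply connected integration together with the fact that a left invariant connection is determined by its values on left invariant vector fields.

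For Part 2, the key tool is Lemma \ref{L:associativity}: on the finite dimensional space $\mathfrak{a}(G,\nabla^+)$ the covariant derivative defines an associative product $X\ast Y:=\nabla^+_X Y$, whose commutator agrees with the Lie bracket of vector fields. Let $B$ be the associative subalgebra of $\mathfrak{a}(G,\nabla^+)$ generated by $\mathsf{aff}(G,\nabla^+)$ and set $\mathcal{A}:=B^{op}$; by construction $\mathcal{A}^{op}=B$ is precisely the subalgebra described in the statement. To verify $\mathfrak{g}_-\hookrightarrow\mathcal{A}_-$, I would use the right invariant vector fields $x^-$: they generate the left action of $G$ on itself, which acts by affine transformations of $\nabla^+$ and is complete, so $x^-\in\mathsf{aff}(G,\nabla^+)\subset B$. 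Since $[x^-,y^-]=-[x,y]^-$, the assignment $x\mapsto x^-$ realises the opposite Lie algebra $(\mathfrak{g}_-)^{op}$ as a Lie subalgebra of $B_-$. Passing to the opposite associative algebra and using the standard identity $(B_-)^{op}=(B^{op})_-=\mathcal{A}_-$, I obtain the desired inclusion $\mathfrak{g}_-\hookrightarrow\mathcal{A}_-$, implemented inside $\mathcal{A}$ by the same map $x\mapsto x^-$ but now read in the product of $\mathcal{A}=B^{op}$.

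The main point to watch is the bookkeeping of the various opposite structures: the Lie algebra embedded in $\mathfrak{a}(G,\nabla^+)_-$ by right invariant fields is $(\mathfrak{g}_-)^{op}$ and not $\mathfrak{g}_-$, which is exactly why one must pass to the opposite of $B$ in order to obtain the algebra $\mathcal{A}$ required by the statement. A secondary check is that $B$ is finite dimensional, which is immediate since $B\subset\mathfrak{a}(G,\nabla^+)$ and $\dim\mathfrak{a}(G,\nabla^+)\le n^2+n$ by Lemma \ref{L:associativity}. The existence and uniqueness in Part 1 rest only on Lie's third theorem and the flat affine versus left symmetric dictionary already recorded in the paper.
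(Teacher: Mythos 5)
Your proof follows essentially the same route as the paper's: integrate $\mathfrak{g}_-$ by Lie's third theorem, equip $G$ with the left invariant connection coming from the left symmetric product, invoke Lemma \ref{L:associativity} for the associative structure on $\mathfrak{a}(G,\nabla^+)$, and take $\mathcal{A}$ to be the opposite of the subalgebra generated by $\mathsf{aff}(G,\nabla^+)$, using completeness of right invariant fields. Your explicit bookkeeping via $[x^-,y^-]=-[x,y]^-$ and $(\mathcal{B}^{op})_-=(\mathcal{B}_-)^{op}$ simply spells out the opposite-algebra step that the paper leaves implicit, so the argument is correct and not a genuinely different approach.
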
 
\begin{proof} Let $G$ be the connected and simply connected Lie group of Lie algebra $\mathfrak{g}_-$. The product on the left symmetric algebra $\mathfrak{g}$ determines a flat affine left invariant connection $\nabla^+$ on $G$. From Lemma \ref{L:associativity}, the connection $\nabla^+$ induces an associative product  over $\mathfrak{a}(G,\nabla^+)$ compatible with the Lie bracket of vector fields on $G$, that is
	\[ [X,Y]=XY-YX.\] 
Let $\mathcal{B}$ be the  subalgebra of the associative algebra $\mathfrak{a}(G,\nabla^+)$ generated by $\mathsf{aff}(G,\nabla^+)$. Then 
$\mathcal{A}$ is precisely the opposite associative algebra of $\mathcal{B}$. As the right invariant vector fields on $G$ are complete infinitesimal affine transformations, it follows that $\mathfrak{g}_-$ is a Lie subalgebra of $\mathcal{A}_-$.
\end{proof}

This theorem  motivates the following 

\begin{definition}\label{D:associativeenvelope} Under the terms of the previous theorem we define the following.
\begin{enumerate} 
\item[(a)] The associative envelope $Env(\mathfrak{g})$ of a left symmetric algebra $(\mathfrak{g},\cdot)$ is the opposite of the associative subalgebra of $\mathfrak{a}(G,\nabla^+)$ generated by $\mathsf{aff}(G,\nabla^+)$. That is, $Env(\mathfrak{g})$ is the opposite of the smallest associative subalgebra of  $\mathfrak{a}(G,\nabla^+)$ containing  $\mathsf{aff}(G,\nabla^+)$.
\item[(b)] Any Lie group of Lie algebra $Env(\mathfrak{g})_-$ will be called an enveloping Lie group of $(G,\nabla^+)$.
\item[(c)] The associative subalgebra algebra $Env(M,\nabla)$ of $\mathfrak{a}(M,\nabla)$ spanned by $\mathfrak{a}_c(M,\nabla)$, with $(M,\nabla)$ a flat affine manifold, is called the associative envelope of $\mathfrak{a}_c(M,\nabla)$.\\ 
Any Lie group of Lie algebra $Env(M,\nabla)_-$  will be called an enveloping Lie group of $\mathsf{Aff}(M,\nabla)$.
	\end{enumerate}	
\end{definition}

\begin{remark} Let $G$ be a flat affine Lie group.
\begin{enumerate}[i.]
\item Any enveloping Lie group of  $G$ is endowed with a flat affine bi-invariant connection.
\item Although the elements of the associative envelope $Env(\mathfrak{g})$ of the left symmetric algebra  $\mathfrak{g}=$Lie$(G)$ are differential operators of order less than or equal to 1 on $G$, the associative envelope is not a subalgebra of the universal enveloping algebra of $\mathfrak{g}$. 
\end{enumerate} \end{remark}

\begin{example} Let $M=\mathbb{R}^2\setminus\{(0,0) \}$ and $\nabla$ the restriction to $M$ of the usual connection $\nabla^0$. We are going to show that $G=\mathsf{Aff}(M,\nabla)$ is an enveloping Lie group of itself. First notice that the set $\left\{x\dfrac{\partial}{\partial x},y\dfrac{\partial}{\partial x},x\dfrac{\partial}{\partial y},y\dfrac{\partial}{\partial y},\dfrac{\partial}{\partial x},\dfrac{\partial}{\partial y} \right\}$ is a linear basis of the real vector space $\mathfrak{a}(M,\nabla)$. It is also easy to verify that $\left\{x\dfrac{\partial}{\partial x},y\dfrac{\partial}{\partial x},x\dfrac{\partial}{\partial y},y\dfrac{\partial}{\partial y} \right\}$ is  a basis of the real vector space $\mathfrak{a}_c(M,\nabla)$. On the other hand, it is verified that $\mathfrak{a}_c(M,\nabla)$ is a subalgebra  of the associative algebra $\mathfrak{a}(M,\nabla)$. Consequently $G=\mathsf{Aff}(M,\nabla)$ is endowed of a flat affine bi-invariant connection determined by $\nabla$. 
\end{example}

\begin{example} \label{Ex:productoR1}
	Let $\nabla^+$ be the flat affine left invariant connection  on $G=\mathsf{Aff}(\mathbb{R})$ defined by
	\begin{equation} \label{Eq:productodeterminadopornablacasoR1} \nabla^+_{e_1^+}e_1^+=2e_1^+,\qquad\nabla^+_{e_1^+}e_2^+=e_2^+,\qquad\nabla^+_{e_2^+}e_1^+=0,\quad \text{and}\quad\nabla^+_{e_2^+}e_2^+=e_1^+.\end{equation}
	A direct calculation shows that a linear basis of $\mathfrak{a}(G,\nabla^+)$ is given by the following vector fields
	\[ e_1^-=x\frac{\partial}{\partial x}+y\frac{\partial}{\partial y},\quad e_2^-=\frac{\partial}{\partial y},\quad C_3=\frac{1}{x}\frac{\partial}{\partial x},\quad C_4=\frac{y}{x}\frac{\partial}{\partial x},\quad  C_5=\left(x+\frac{y^2}{x}\right)\frac{\partial}{\partial x}\]\[\text{and}\qquad
	C_6=\left(-xy-\frac{y^3}{x}\right)\frac{\partial}{\partial x}+(x^2+y^2)\frac{\partial}{\partial y},\]
	where $e_1^-$ and $e_2^-$ denote the right invariant vector fields. As the connection is left invariant, the vector fields  $e_1^-$ and $e_2^-$ are complete. Moreover, it can be checked that no real linear combination of the fields $C_3,$ $C_4$, $C_5$ and $C_6$ is complete.  Consequently $(e_1^-,e_2^-)$ is a linear basis of $\mathsf{aff}(G,\nabla^+)$. 
	
	On the other hand, the multiplication table of  the product defined by $\nabla^+$, i.e., the product $XY=\nabla_X^+Y$, on the basis of $\mathfrak{a}(G,\nabla^+)$ displayed above  is given by

	\begin{equation} \label{Tab:tabla1}
		\begin{array}{c|c|c|c|c|c|c}  & e_1^- &e_2^-&C_3&C_4&C_5&C_6\\\hline e_1^-&e_1^-+C_5& C_4&0&C_4&2C_5&2C_6\\\hline e_2^-& e_2^-+C_4&C_3&0&C_3&2C_4&2e_1^- -2C_5\\\hline C_3& 2C_3 & 0 &0&0&2C_3&2e_2^--2C_4\\\hline C_4&2C_4 &0 &0 &0&2C_4&2e_1^--2C_5\\\hline C_5&2C_5 &0 &0 &0 &2C_5 &2C_6\\\hline C_6& C_6&C_5 &0 & C_5&0&0 
		\end{array} 
	\end{equation}
	
	It follows from Table \ref{Tab:tabla1} that the real associative subalgebra  of $\mathfrak{a}(G,\nabla^+)$ generated by $\{e_1^-,e_2^-\}$ has linear basis $(e_1^-,e_2^-,C_3,C_4,C_5)$.That is, $Env(\mathfrak{g})$ is the real 5-dimensional associative algebra with linear basis $(e_1^-,e_2^-,C_3,C_4,C_5)$ and the opposite product of Table \eqref{Tab:tabla1}. 
	
	The  animation below shows the lines of  flow of each of the vector fields $e_1^-,e_2^-,C_3,C_4,C_5$ and $C_6$ with the initial condition $(1.5,-1)$. The fact that the flows of the vector fields $C_3,C_4,C_5$ and $C_6$ cross the boundary to the outside of the orbit of $(0,0)$ determined by the affine \'etale representation relative to $\nabla$, corresponds to the fact that the vector fields are not complete in $G_0$. (To play the animation click on the image).
	\begin{center}
		\animategraphics[controls,loop,height=7cm]{4}{imagen}{1}{46}
	\end{center}
\end{example}

\begin{remark} The reader can verify that the simply connected Lie group of Lie algebra  $Env(\mathfrak{g})_-$ of the previous example is isomorphic to the group   $\mathbb{E}:=(\mathbb{R}^2\rtimes_{\theta_1} \mathbb{R})\rtimes_{\theta_2} G$, where $G=\mathsf{Aff}(\mathbb{R})_0$ is the connected component  of the unit of $\mathsf{Aff}(\mathbb{R})$ and the actions $\theta_1$ and $\theta_2$ are respectively given by
\[   \theta_1(t)(x,y)=(e^{2t}x,e^{2t}y)\quad\text{ and }\quad  \theta_2(x,y)(z_1,z_2,z_3)=\left(x^2z_1-xyz_2+y^2z_3,xz_2-2yz_3,z_3\right) \] 
The Lie group $\mathbb{E}$ is also isomorphic to the semidirect product $\mathcal{H}_3\rtimes_\rho \mathbb{R}^2$ of the additive group $\mathbb{R}^2$ acting on the 3-dimensional Heisenberg group where $\rho$ is given by 
\begin{equation} \label{Eq:actiononheisenberg} \rho(a,b)(x,y,z)=\left(e^{a}x,e^{2a+2b}y,e^a(e^{2b}-1)x+e^{a+2b}z\right).\end{equation}
	
\end{remark}

At this point we have some questions for which we do not have an answer.

\noindent \textbf{Questions.}\begin{enumerate}
\item[1.]  To determine a transformation group $E$ of diffeomorphisms of $P=L(M)$ so that $\mathsf{Lie}(E)=Env(\mathfrak{a}_c(P))$ in the case  when $\dim(Env(\mathfrak{a}_c(P)))<n^2+n$. \vskip3pt
\item[2.] Is it possible to realize an enveloping Lie group  of a flat affine Lie group $(G,\nabla^+)$ as a group of transformations of $L(G)$ (eventually as a subgroup of the Lie group $K$)? 
\end{enumerate}
\noindent
If $\mathfrak{a}_c(P)=\mathfrak{a}(P,\omega)$ the answer to the first question is positive. When $\dim(Env(\mathfrak{a}_c(P)))<n^2+n$, the method described on Theorem \ref{T:envelopeofaflataffinemanifold} could give an answer to these questions. \vskip5pt

%

The following example describes the  generic case of flat affine connections on $\mathsf{Aff}(\mathbb{R})$.

\begin{example} \label{Ex:genericcase}
	Let us consider the family of left symmetric products on $\mathfrak{g}=\mathsf{aff}(\mathbb{R})$  given by
	$$\begin{array}{c|c|c}
	\cdot&e_1&e_2\\
	\hline
	e_1&\alpha e_1& e_2\\
	\hline
	e_2&0&0
	\end{array}$$
	Let $\alpha\ne0$ be fixed and $\nabla^+=\nabla(\alpha)$ be the affine connection determined by this product. An easy calculation shows that a linear basis of the space of complete infinitesimal affine transformations of $\mathsf{Aff}(\mathbb{R})$ relative to $\nabla^+$ is 
	\begin{align*}
		C_1=\frac{1}{\alpha}x\frac{\partial}{\partial x}, && C_2=\frac{1}{\alpha}x^\alpha\frac{\partial}{\partial y}, && C_3=y\frac{\partial}{\partial y},
		& &\mbox{and}&& C_4=\frac{\partial}{\partial y}.
	\end{align*}
	It can be verified that the product determined by $\nabla^+$ on these vector fields is given by
	\begin{align*}
		\begin{array}{c|c|c|c|c}
			&C_1&C_2&C_3&C_4\\
			\hline
			C_1&C_1&C_2&0&0\\
			\hline
			C_2&0&0&C_2&0\\
			\hline
			C_3&0&0&C_3&0\\
			\hline
			C_4&0&0&C_4&0
		\end{array}
	\end{align*}
	By Lemma \ref{L:associativity}, the vector space of infinitesimal affine transformations relative to $\nabla^+$ is an associative 6-dimensional algebra under the product defined by $\nabla^+$. Therefore we get that the vector space with linear basis $C_1,C_2,C_3,C_4$ and the product  on the table above is an associative 4 dimensional subalgebra.  
	This 4-dimensional algebra is the associative envelope of $(\mathfrak{g},\cdot).$ \end{example}

In the following example we use Theorem \ref{T:envelopeofaflataffinemanifold} to naturally construct a linear Lie group whose Lie algebra is the Lie algebra $Env(G,\nabla^+)_-$, with $(G,\nabla^+)$ a real finite dimensional flat affine Lie group. 


\begin{example} One can easily verify that 
\[ G'=\left\{\left(\begin{matrix}1+\beta_1&\beta_2&0\\0&1+\beta_3&0\\\beta_1/\alpha&\beta_2/\alpha+\beta_4&1\end{matrix}\right)\bigg|\ \beta_1\ne-1\ \text{and}\ \beta_3\ne-1\  \right\}\]  
is an enveloping Lie group of the flat affine Lie group $(G,\nabla^+)$ of the previous example.
\end{example}

We finish the section with a more general example.

\begin{example}  Let  $G=GL_n(\mathbb{R})$ endowed with the flat affine bi-invariant connection $D$ determined by composition of linear endomorphisms. Given the local coordinates $\left[x_{ij}\right]$ with $i,j=1,\dots,n$, it is easy to check that  linear bases of left and right invariant vector fields are given by
\[ E_{rs}^+=\sum_{i=1}^n x_{ir}\frac{\partial}{\partial x_{is}}\quad \text{and}\quad E_{rs}^-=\sum_{i=1}^n x_{si}\frac{\partial}{\partial x_{ri}} ,\]
with $r,s=1,\dots,n$. The group $\mathsf{Aff}(G,D)$ is of dimension $2n^2-1$ (\cite{BM})  and the Lie bracket of its Lie algebra is the  bracket of vector fields on $G$.  Using the product determined by $D$ on  left and right invariant vector fields one gets 
\[ D_{E_{pq}^+}E_{rs}^-= x_{sp}\frac{\partial}{\partial x_{rq}},\quad\text{for all}\quad p,q,r,s=1,\dots,n. \]
It follows that an enveloping Lie group of $G$ is $GL_{n^2}(\mathbb{R})$.
\end{example}

The following remark describes the algorithm to compute the associative envelope of a finite dimensional left symmetric algebra.

\begin{remark} Given a finite dimensional real or complex left symmetric algebra $A$, do as follows. 

 Find, applying Lie's third theorem, the connected and simply connected Lie group $G(A)$ of Lie algebra $A_-$ of commutators of $A$.  This group is endowed with a flat affine left invariant connection $\nabla^+$ determined by the product on $A$. 
 
Compute the associative subalgebra $\mathcal{B}$ of $\mathfrak{a}(G(A),\nabla^+)$ generated by  the  space $\mathsf{aff}(G(A),\nabla^+)$.

Finally, the associative envelope $Env(A)$ of $A$ is $\mathcal{B}^{op}$.
\end{remark}

To finish the section, let us pose the following interesting problem.\medskip 

\noindent 
\textbf{Open Problem.} To find an algebraic method to determine the associative envelope, in the sense of the Definition \ref{D:associativeenvelope}, of a real or complex finite dimensional left symmetric algebra.
 
\section{Affine Transformation Groups endowed with a flat Affine or Projective bi-invariant structure} \label{S:studyofaffinetransformationgroups}
We start with the following result that completes and generalizes Theorem 13. in \cite{MSG}. 
\begin{theorem}  \label{T:geodesicallycompletecase}  Let $(M,\nabla)$ be  flat affine and connected. If $\nabla$ is geodesically  complete, then $\mathsf{Aff}(M,\nabla)$ admits a flat affine bi-invariant structure.
	
Moreover $\mathsf{Aff}(M,\nabla)$ contains a normal Lie subgroup $T(M,\nabla)$ so that
\[ Id\longrightarrow T(M,\nabla)\longrightarrow \mathsf{Aff}(M,\nabla)\longrightarrow {}_{T(M,\nabla)}\backslash \mathsf{Aff}(M,\nabla)\longrightarrow Id\]
is a split exact sequence of Lie groups endowed with flat affine bi-invariant structures.
\end{theorem}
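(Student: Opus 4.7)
The plan has two main stages. First, I would leverage geodesic completeness to show that every infinitesimal affine transformation is automatically complete, so that $\mathsf{aff}(M,\nabla)$ coincides with $\mathfrak{a}(M,\nabla)$; then the associative structure of Lemma~\ref{L:associativity} combined with Theorem~\ref{T:associativeimplybiinvariantstructures} immediately yields the flat affine bi-invariant connection. Second, I would use the global affine development of Section~\ref{S:newresults} to construct the split exact sequence.

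For the completeness claim, given $X\in\mathfrak{a}(M,\nabla)$ with lift $L(X)$ on $P=L(M)$, recall from the discussion around \eqref{Eq:condition1oninfinitesimalaffinetransformations} that $L(X)$ commutes with each standard horizontal vector field $B(\xi)$ and is $R_a$-invariant. Geodesic completeness of $\nabla$ is equivalent to completeness of all the $B(e_i)$ on $P$. Since $\{B(e_1),\dots,B(e_n),E_{11}^*,\dots,E_{nn}^*\}$ is an absolute parallelism and $L(X)$ commutes with the $B(e_i)$ and is invariant under the vertical flow, the maximal integral curve of $L(X)$ through any point can be extended for all times by sliding along these complete vector fields (this is the classical argument, cf.\ \cite{KN}). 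Hence $L(X)$ and therefore $X$ are complete, giving $\mathsf{aff}(M,\nabla)=\mathfrak{a}(M,\nabla)$. By Lemma~\ref{L:associativity} this is an associative algebra whose commutator is the usual Lie bracket, and Theorem~\ref{T:associativeimplybiinvariantstructures} then furnishes the flat affine bi-invariant connection on $\mathsf{Aff}(M,\nabla)$.

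For the split exact sequence, I would invoke the Proposition of Section~\ref{S:newresults} valid in the geodesically complete case to obtain a Lie group homomorphism $\rho:\mathsf{Aff}(M,\nabla)\to\mathsf{Aff}(\mathbb{R}^n,\nabla^0)$ integrating $(\theta,\omega)$. Composing with the linear-part projection $\pi:\mathbb{R}^n\rtimes GL_n(\mathbb{R})\to GL_n(\mathbb{R})$, set $T(M,\nabla):=\ker(\pi\circ\rho)$, a closed normal Lie subgroup whose image under $\rho$ lies in the abelian translation subgroup $\mathbb{R}^n$. Because $(\theta,\omega)$ is injective at the identity, $T(M,\nabla)$ is locally abelian, hence abelian on its identity component, and therefore carries a (trivial) flat affine bi-invariant structure. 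The quotient $\mathsf{Aff}(M,\nabla)/T(M,\nabla)$ embeds as a Lie subgroup of $GL_n(\mathbb{R})$, inheriting the bi-invariant flat affine structure coming from matrix multiplication as in the last example of Section~\ref{S:associativeenvelope}. The splitting is obtained by pulling back along $\rho$ the canonical section $GL_n(\mathbb{R})\hookrightarrow\mathsf{Aff}(\mathbb{R}^n)$, $A\mapsto(0,A)$, after identifying its preimage with a Lie subgroup of $\mathsf{Aff}(M,\nabla)$.

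The main obstacle will be verifying that this canonical splitting genuinely lifts to a closed Lie subgroup of $\mathsf{Aff}(M,\nabla)$ through $\rho$ — equivalently, that the splitting $\mathsf{aff}(M,\nabla)=\mathrm{Lie}(T(M,\nabla))\oplus\mathfrak{s}$ suggested by the image semidirect product is integrable at the group level. This should follow from the compatibility between the bi-invariant associative product on $\mathsf{aff}(M,\nabla)$ obtained in the first stage and the associative product on $\mathsf{aff}(\mathbb{R}^n)=\mathbb{R}^n\oplus\mathfrak{gl}_n(\mathbb{R})$ transported through $\rho_{*,Id}=(\theta,\omega)$, but it requires careful handling of the identity component and possibly of $\pi_1(M)$.
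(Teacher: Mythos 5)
Your first stage is correct, and it is a genuinely different route from the paper's. The paper does not argue via completeness of infinitesimal affine transformations: it passes to the universal cover $\widehat{M}$, applies Ehresmann's developing theorem (\cite{E}) to obtain an immersion $D:\widehat{M}\to\mathbb{R}^n$ and a homomorphism $A:\mathsf{Aff}(\widehat{M},\widehat{\nabla})\to\mathsf{Aff}(\mathbb{R}^n,\nabla^0)$, uses geodesic completeness to show $A$ is injective and of full dimension, hence an isomorphism, and then transports the flat affine bi-invariant structure of $\mathsf{Aff}(\mathbb{R}^n)$ given by Theorem 13 of \cite{MSG}. Your argument --- completeness of $\nabla$ forces every element of $\mathfrak{a}(M,\nabla)$ to be complete (the classical result in \cite{KN}), so $\mathsf{aff}(M,\nabla)=\mathfrak{a}(M,\nabla)$ carries the associative product of Lemma~\ref{L:associativity} and Theorem~\ref{T:associativeimplybiinvariantstructures} applies --- is exactly the mechanism the paper itself uses for the compact case, and the remark following Theorem~\ref{T:geodesicallycompletecase} confirms that the connection so obtained coincides with the paper's $\widetilde{\nabla}$. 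For the first claim your route is fine and arguably more intrinsic, since it avoids the development altogether.

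The second stage has a genuine gap. The homomorphism $\rho$ you invoke is only available at the level of the universal cover: the developing construction assigns an affine map of $\mathbb{R}^n$ to an affine transformation of $M$ only after choosing a lift to $\widehat{M}$, i.e.\ only up to the affine holonomy, and in general there is no Lie group homomorphism $\mathsf{Aff}(M,\nabla)\to\mathsf{Aff}(\mathbb{R}^n,\nabla^0)$ with derivative $(\theta,\omega)$ when $M$ is not simply connected. Concretely, for a flat torus $M=\mathbb{R}^n/\mathbb{Z}^n$ the identity component of $\mathsf{Aff}(M,\nabla)$ is a compact torus, and a homomorphism from it to $\mathsf{Aff}(\mathbb{R}^n)$ whose derivative is the (injective) map onto the translation part of $\mathsf{aff}(\mathbb{R}^n)$ cannot exist, since the image of a compact group cannot have noncompact one-parameter subgroups of translations. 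So the definitions $T(M,\nabla)=\ker(\pi\circ\rho)$, the embedding of the quotient into $GL_n(\mathbb{R})$, and the pulled-back section do not get off the ground as stated. This is precisely why the paper's proof works with $\mathsf{Aff}(\widehat{M},\widehat{\nabla})$, where Ehresmann's theorem plus completeness produce an actual isomorphism $A$ onto $\mathsf{Aff}(\mathbb{R}^n,\nabla^0)$; the split exact sequence is then not something to be re-proved but is transported directly from the canonical split sequence $0\to\mathbb{R}^n\to\mathsf{Aff}(\mathbb{R}^n)\to GL_n(\mathbb{R})\to Id$ with its bi-invariant structures (Theorem 13 of \cite{MSG}), with $T$ corresponding to the translations --- so the point you flag as your ``main obstacle'' disappears in that setting. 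What then remains, in either presentation, is the descent from $\mathsf{Aff}(\widehat{M},\widehat{\nabla})$ to $\mathsf{Aff}(M,\nabla)$ (the paper does this through the identification of the Lie algebras of infinitesimal affine transformations); handling $\pi_1(M)$ at this step, which you mention only in passing, is exactly the piece your sketch does not supply.
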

\begin{proof} Let $\widehat{M}$ be the universal covering of $M$ with covering map $p:\widehat{M}\longrightarrow M$  and $\widehat{\nabla}$ the connection on $\widehat{M}$ pullback  of $\nabla$ by $p$. From Ehresmann's Developing theorem (see \cite{E}, for more details see \cite{Sh}), there exists an affine immersion $D:\widehat{M}\rightarrow\mathbb{R}^n$ and a Lie group homomorphism $A:\mathsf{Aff}(\widehat{M},\widehat{\nabla})\rightarrow \mathsf{Aff}(\mathbb{R}^n,\nabla^0)$ so that the following diagram commutes 
\[ \xymatrix{ \widehat{M}\ar[d]^F \ar[r]^D&\mathbb{R}^n\ar[d]^{A(F)} \\ \widehat{M}\ar[r]^D &\mathbb{R}^n  }     \]
for all $F\in \mathsf{Aff}(\widehat{M},\widehat{\nabla})$. 

Now, if $F\in Ker A$, that is, $A(F)=Id_{\mathbb{R}^n}$, as the diagram is commutative and $\nabla$ is complete, we get that $F=Id_{\widehat{M}}$. Moreover, the completeness of $\nabla$ also implies that $\dim(\mathsf{Aff}(M,\nabla))=\dim(\mathsf{Aff}(\widehat{M},\widehat{\nabla}))=\dim(\mathsf{Aff}(\mathbb{R}^n,\nabla^0))$ (see \cite{JM2}). Consequently $A$ is an isomorphism of Lie groups.

On the other hand, from Theorem 13. in \cite{MSG}, the group $\mathsf{Aff}(\mathbb{R}^n)$ admits a flat affine bi-invariant structure $\nabla'$, inducing the usual connection $\nabla^0$ of $\mathbb{R}^n$ and the connection on $GL_n(\mathbb{R})$, given by composition of linear endomorphisms, so that the canonical sequence
\[  0\rightarrow\mathbb{R}^n\rightarrow \mathsf{Aff}(\mathbb{R}^n)\rightarrow GL_n(\mathbb{R})\rightarrow Id \]
is a split sequence of  Lie groups admitting flat affine bi-invariant structures.

 Hence transporting $\nabla'$ by means of $A$, we get that $\mathsf{Aff}(\widehat{M},\widehat{\nabla})$ admits a flat affine bi-invariant structure. As $\mathsf{aff}(\widehat{M},\widehat{\nabla})\cong \mathsf{aff}(M,\nabla)$, the Lie group $\mathsf{Aff}(M,\nabla)$ admits a flat affine bi-invariant structure $\widetilde{\nabla}$ with the suitable properties.
\end{proof}  

The following result is a direct consequence  of this  theorem.

\begin{corollary} Let $(G,\nabla^+)$ be a flat affine Lie group and let $(\mathfrak{g},\cdot)$ be the left symmetric algebra determined by $\nabla^+$ on $\mathfrak{g}=$Lie$(G)$. If $\ \nabla^+$ is geodesically complete then the  associative envelope of $(\mathfrak{g},\cdot)$ is isomorphic to $\mathsf{aff}(\mathbb{R}^n,\nabla^0)$ with the product given by composition of affine endomorphims. In fact, the vector space $\mathfrak{a}(G,\nabla^+)$  with the product $XY:=\nabla^+_XY$ is an associative algebra isomorphic to  $\mathsf{aff}(\mathbb{R}^n,\nabla^0)$.\\	
Moreover, if $(M,\nabla)$ is a connected complete flat affine manifold, $\mathfrak{a}(M,\nabla)$  with the product $XY:=\nabla_XY$ is an associative algebra isomorphic to  $\mathsf{aff}(\mathbb{R}^n,\nabla^0)$ with the product of composition of affine endomorphisms
\end{corollary}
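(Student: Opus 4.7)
The corollary collapses to a single observation once one notices that completeness forces every infinitesimal affine transformation to be complete. My plan is to first identify the associative envelope with the whole algebra $\mathfrak{a}(M,\nabla)$, and then transport it via the developing map to $\mathsf{aff}(\mathbb{R}^n,\nabla^0)$, upgrading the Lie algebra isomorphism already hidden inside the proof of Theorem \ref{T:geodesicallycompletecase} to an isomorphism of associative algebras.

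First I would invoke the classical fact (used in Theorem \ref{T:geodesicallycompletecase} through \cite{JM2}) that if $\nabla$ is geodesically complete, then every infinitesimal affine transformation of $(M,\nabla)$ is automatically complete. Consequently
\[  \mathfrak{a}(M,\nabla) \;=\; \mathsf{aff}(M,\nabla) \;=\; \mathfrak{a}_c(M,\nabla).   \]
By Lemma \ref{L:associativity}, the space $\mathfrak{a}(M,\nabla)$ already carries the associative product $XY=\nabla_X Y$, so the smallest associative subalgebra of $\mathfrak{a}(M,\nabla)$ containing $\mathfrak{a}_c(M,\nabla)$ is all of $\mathfrak{a}(M,\nabla)$. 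In terms of Definition \ref{D:associativeenvelope}, this identifies $Env(M,\nabla)$ with the opposite of $(\mathfrak{a}(M,\nabla),\nabla)$, and analogously $Env(\mathfrak{g})$ with the opposite of $(\mathfrak{a}(G,\nabla^+),\nabla^+)$ when $M=G$.

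Next I would recycle the developing map $D:\widehat{M}\to\mathbb{R}^n$ and the Lie group isomorphism $A:\mathsf{Aff}(\widehat{M},\widehat{\nabla})\to\mathsf{Aff}(\mathbb{R}^n,\nabla^0)$ supplied by the proof of Theorem \ref{T:geodesicallycompletecase}. Differentiating $A$ at the identity gives a Lie algebra isomorphism $dA:\mathsf{aff}(\widehat{M},\widehat{\nabla})\to\mathsf{aff}(\mathbb{R}^n,\nabla^0)$. Differentiating the commuting diagram $D\circ\phi_t^X=A(\phi_t^X)\circ D$ in $t$ at $0$ shows that $X$ and $dA(X)$ are $D$-related. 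Since $D$ is an affine immersion, i.e.\ $D^*\nabla^0=\widehat{\nabla}$, $D$-relatedness intertwines the two covariant derivatives and yields
\[  dA(\widehat{\nabla}_X Y) \;=\; \nabla^0_{dA(X)}\,dA(Y),   \]
so $dA$ is not merely a Lie algebra isomorphism but an isomorphism of the associative algebras induced by the two flat connections. Composing with the obvious associative identification $\mathsf{aff}(\widehat{M},\widehat{\nabla})\cong\mathsf{aff}(M,\nabla)$ coming from the locally affine covering $p$, and passing to opposite algebras, gives the identification of $Env(M,\nabla)$ with $\mathsf{aff}(\mathbb{R}^n,\nabla^0)$ equipped with composition of affine endomorphisms. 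The Lie group statement is the specialization $M=G$, $\nabla=\nabla^+$.

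The main obstacle is this upgrade from Lie-algebra to associative-algebra isomorphism: one must check that the connection-preserving condition $D^*\nabla^0=\widehat{\nabla}$ really does translate, through $D$-relatedness, into the intertwining identity $D_*(\widehat{\nabla}_X Y)=\nabla^0_{D_*X}D_*Y$. This is nothing but the defining property of an affine map, so once it is cleanly stated at the level of $D$-related vector fields the rest is bookkeeping; the subtlety lies entirely in handling the fact that $D$ is only an immersion (not necessarily a diffeomorphism), for which it is essential that $dA(X)$ is an intrinsically defined vector field on $\mathbb{R}^n$ rather than a pushforward along $D$.
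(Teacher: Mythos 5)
Your argument is correct in substance and is essentially the derivation the paper intends: the corollary is presented as a direct consequence of Theorem \ref{T:geodesicallycompletecase}, and you use exactly its ingredients (the developing map $D$ and the isomorphism $A$, geodesic completeness forcing $\mathfrak{a}(M,\nabla)=\mathsf{aff}(M,\nabla)$ so that the envelope is the whole algebra by Lemma \ref{L:associativity}), adding only the worthwhile explicit check that $D$-relatedness upgrades $dA$ from a Lie-algebra to an associative-algebra isomorphism. The one step you label ``obvious'', the identification $\mathsf{aff}(\widehat{M},\widehat{\nabla})\cong\mathsf{aff}(M,\nabla)$ induced by the covering, is precisely the identification the paper itself invokes in the proof of Theorem \ref{T:geodesicallycompletecase}, so your route coincides with the paper's.
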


\begin{remark} The following two connections on $\mathsf{Aff}(M,\nabla)$ agree. The  connection $\widetilde{\nabla}$, as in the proof of Theorem \ref{T:geodesicallycompletecase}, and   the connection $\nabla^1$ determined by the associative product induced from $\nabla$ on infinitesimal affine transformations, i.e.,
	\[  X Y:=\nabla_XY. \]
\end{remark}

If $\mathsf{Aff}_x(M,\nabla)$ denotes the affine transformation of $(M,\nabla)$ fixing $x\in M$, we have

\begin{corollary}
Let $(M,\nabla)$ be a flat affine manifold. If $\dim \mathsf{Aff}_x(M,\nabla)\geq n^2$ for each $x\in M.$ Then $\mathsf{Aff}(M,\nabla)$ is endowed with a flat affine bi-invariant structure as that of   Theorem \ref{T:geodesicallycompletecase}.
\end{corollary}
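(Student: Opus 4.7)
The strategy is to reduce to Theorem \ref{T:geodesicallycompletecase} by showing that the hypothesis forces $\nabla$ to be geodesically complete.

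I would begin by recalling the classical upper bound $\dim \mathsf{Aff}_x(M,\nabla) \leq n^2$. The Lie algebra $\{X\in\mathsf{aff}(M,\nabla):X(x)=0\}$ injects into $\mathfrak{gl}(T_xM)$ via the infinitesimal linear isotropy representation $\rho_x : X\mapsto (\nabla X)_x$; injectivity follows from the fact that an infinitesimal affine transformation is determined at a point by its value and first covariant derivative there. Combined with the hypothesis, one gets $\dim \mathsf{Aff}_x(M,\nabla) = n^2$ for every $x$, and $\rho_x$ is a linear isomorphism onto $\mathfrak{gl}(T_xM)$.

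Next, fix $x$ and use surjectivity of $\rho_x$ to choose $X\in\mathsf{aff}(M,\nabla)$ with $X(x)=0$ and $\rho_x(X)=\mathrm{id}_{T_xM}$. Since $X$ is complete, its flow produces a one-parameter subgroup $\{f_s\}_{s\in\mathbb{R}}\subset \mathsf{Aff}(M,\nabla)$ satisfying $f_s(x)=x$ and $(f_s)_{*,x}=e^s\,\mathrm{id}_{T_xM}$. Given a maximal geodesic $\gamma$ with $\gamma(0)=x$ and $\gamma'(0)=v$, since $f_s$ is affine, $f_s\circ\gamma$ is a geodesic starting at $x$ with initial velocity $e^s v$; uniqueness and the affine reparametrization property for geodesics give $f_s(\gamma(r))=\gamma(e^s r)$ on the common domain. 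Because $f_s$ is defined globally on $M$, for any fixed $r_0$ in the domain of $\gamma$ the point $\gamma(e^s r_0)=f_s(\gamma(r_0))$ is defined for all $s\in\mathbb{R}$; as $s$ varies, $e^s r_0$ sweeps out $(0,\infty)$ when $r_0>0$ and $(-\infty,0)$ when $r_0<0$, so the maximal domain of $\gamma$ is all of $\mathbb{R}$. Since the dimension hypothesis holds at every $x\in M$ and every maximal geodesic passes through such a point, $\nabla$ is geodesically complete.

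The conclusion follows immediately by invoking Theorem \ref{T:geodesicallycompletecase}. The main subtlety is the extension step, where one has to exploit the global definition of $f_s$ on $M$ against an a priori bounded domain of $\gamma$; the identity $f_s(\gamma(r))=\gamma(e^s r)$ combined with varying $s\in\mathbb{R}$ is the key device that forces the maximal domain to be all of $\mathbb{R}$.
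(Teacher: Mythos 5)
Your proof is correct, and its overall shape is the same as the paper's: reduce to geodesic completeness and then invoke Theorem \ref{T:geodesicallycompletecase}. The difference is in how the completeness step is handled. The paper disposes of it in one line by citing the known criterion (from \cite{BM} and \cite{T}) that the only obstruction to completeness of a flat affine connection is the existence of a point $x$ with $\dim \mathsf{Aff}_x(M,\nabla)<n^2$; you instead reprove that criterion from scratch: the linear isotropy representation $X\mapsto (\nabla X)_x$ is injective on the isotropy algebra, so the hypothesis forces it to be onto $\mathfrak{gl}(T_xM)$, which yields a complete infinitesimal affine transformation vanishing at $x$ whose flow acts at $x$ by homotheties $e^s\,\mathrm{id}$, and the resulting dilation symmetry forces every maximal geodesic through $x$ to be defined on all of $\mathbb{R}$. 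What your route buys is self-containedness (and it makes visible exactly where the bound $n^2$ comes from); what the citation buys is brevity. One small point worth tightening in your extension step: the identity $f_s(\gamma(r))=\gamma(e^s r)$ a priori holds only where both sides are defined, so you cannot directly conclude that $\gamma(e^s r_0)$ ``is defined'' from it. The clean formulation is via maximality: $f_s\circ\gamma$ is a geodesic on the whole maximal interval $I$ of $\gamma$ with initial velocity $e^s v$, while the maximal geodesic with that initial velocity is $t\mapsto\gamma(e^s t)$ with maximal domain $e^{-s}I$; hence $I\subseteq e^{-s}I$, i.e. $e^s I\subseteq I$ for every $s\in\mathbb{R}$, which forces $I=\mathbb{R}$ since $I$ is an open interval containing $0$. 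With that rephrasing your argument is airtight.
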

\begin{proof} The result follows by recalling that the only obstruction to the completeness of $\nabla$ is the existence of a point $x\in M$ so that $\dim \mathsf{Aff}_x(M)<n^2$ (see \cite{BM} and \cite{T}). 
\end{proof} 

%

 The following proposition is a consequence of the results presented in the previous section.

\begin{proposition}
Let $\nabla$ be a flat affine connection on $M.$ If $M$ is a compact manifold then the Lie group $\mathsf{Aff}(M,\nabla)$ is endowed with a flat affine bi-invariant connection $\widetilde{\nabla}$ inherited from $\nabla$. 
\end{proposition}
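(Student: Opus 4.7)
The plan is to combine three previously established ingredients: (i) compactness forces completeness of infinitesimal affine transformations, (ii) Lemma~\ref{L:associativity} produces an associative algebra structure on the space of all infinitesimal affine transformations, and (iii) Theorem~\ref{T:associativeimplybiinvariantstructures} converts an associative algebra whose commutator Lie algebra is $\mathsf{Lie}(G)$ into a flat affine bi-invariant connection on $G$.

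First I would argue that, because $M$ is compact, every smooth vector field on $M$ is complete; hence every $X \in \mathfrak{a}(M,\nabla)$ is complete, and therefore
\[
\mathsf{aff}(M,\nabla) \;=\; \mathfrak{a}(M,\nabla).
\]
In particular, the Lie algebra of $\mathsf{Aff}(M,\nabla)$ coincides with the full space $\mathfrak{a}(M,\nabla)$, with the usual bracket of vector fields on $M$.

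Next I would invoke Lemma~\ref{L:associativity}: since $(M,\nabla)$ is flat affine, the product $X\cdot Y:=\nabla_X Y$ endows the finite-dimensional vector space $\mathfrak{a}(M,\nabla)$ with the structure of an associative algebra $\mathcal{A}$, and the associated commutator $[X,Y]_{\mathcal{A}}=\nabla_X Y-\nabla_Y X$ agrees, by torsion-freeness, with the Lie bracket of vector fields. Combined with the previous paragraph, this says precisely that $\mathsf{Lie}(\mathsf{Aff}(M,\nabla))=\mathcal{A}_-$, the Lie algebra of commutators of $\mathcal{A}$.

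Finally, applying Theorem~\ref{T:associativeimplybiinvariantstructures} to the Lie group $G=\mathsf{Aff}(M,\nabla)$ and the associative algebra $\mathcal{A}$ yields a flat affine bi-invariant connection $\widetilde{\nabla}$ on $\mathsf{Aff}(M,\nabla)$; by construction this connection is inherited from $\nabla$, since the associative product on $\mathfrak{a}(M,\nabla)$ is defined entirely in terms of $\nabla$. The only step that is not completely routine is justifying the identification $\mathsf{aff}(M,\nabla)=\mathfrak{a}(M,\nabla)$ in step one, and this rests only on the standard fact that vector fields on a compact manifold are complete; thereafter the conclusion is an immediate consequence of the two cited results.
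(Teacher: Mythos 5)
Your proposal is correct and follows essentially the same route as the paper: compactness gives completeness of all infinitesimal affine transformations so that $\mathsf{aff}(M,\nabla)=\mathfrak{a}(M,\nabla)$, Lemma \ref{L:associativity} supplies the associative product $XY=\nabla_XY$ whose commutator is the Lie bracket, and Theorem \ref{T:associativeimplybiinvariantstructures} then yields the flat affine bi-invariant structure on $\mathsf{Aff}(M,\nabla)$.
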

\begin{proof} 
	Since $\nabla$ is flat affine, it follows from \eqref{Eq:infinitesimalaffinetransformationsonflataffineconnections} that Product \eqref{Eq:productinducedbyaconnection} induces an associative product on $\mathfrak{a}(M,\nabla)$. As $M$ is a compact manifold every vector field on $M$	is complete, in particular every infinitesimal affine transformation of $(M,\nabla)$ is complete. Hence the Lie algebra $\mathfrak{a}_c(M,\nabla)=\mathfrak{a}(M,\nabla)$ of the Lie group $\mathsf{Aff}(M,\nabla)$ has an associative product given by $XY=\nabla_XY$ with $X,Y\in\mathfrak{a}(M,\nabla)$. Therefore,   from Theorem \ref{T:associativeimplybiinvariantstructures}, there exists  a flat affine bi-invariant structure on $\mathsf{Aff}(M,\nabla)$ determined by $\nabla$. 
\end{proof}

\begin{corollary} Let $G$ be a Lie group endowed with a  flat affine left invariant connection  and let $D$ be a discrete cocompact subgroup  of $G$. The group of affine transformations of the flat affine compact manifold $M=G/D$ has a flat affine bi-invariant structure.  
\end{corollary}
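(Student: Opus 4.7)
The plan is to reduce the statement to an immediate application of the preceding proposition. What I need to produce is a flat affine compact manifold $(M,\nabla_M)$, and then invoke the proposition to obtain the flat affine bi-invariant structure on $\mathsf{Aff}(M,\nabla_M)$.

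First I would verify that $M=G/D$ is a compact manifold. Since $D$ is a discrete subgroup of the Lie group $G$, the action of $D$ on $G$ (by translation on the appropriate side, so as to preserve the left-invariant connection $\nabla^+$) is free and properly discontinuous, so the projection $\pi\colon G\longrightarrow M$ is a smooth covering map and $M$ inherits a smooth structure of the same dimension as $G$. Compactness of $M$ is exactly the cocompactness hypothesis on $D$.

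Next I would descend $\nabla^+$ to $M$. Because $\nabla^+$ is left invariant, the translations by elements of $D$ are affine diffeomorphisms of $(G,\nabla^+)$; hence the deck transformations of $\pi$ preserve $\nabla^+$ and there is a unique connection $\nabla_M$ on $M$ with $\pi^{*}\nabla_M=\nabla^+$. Since $\pi$ is a local diffeomorphism and vanishing of curvature and torsion are local conditions, the pair $(M,\nabla_M)$ is a flat affine manifold.

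Finally, since $(M,\nabla_M)$ is a compact flat affine manifold, the preceding proposition applies directly and produces a flat affine bi-invariant connection $\widetilde{\nabla}$ on the Lie group $\mathsf{Aff}(M,\nabla_M)$ inherited from $\nabla_M$ via the associative product on $\mathfrak{a}(M,\nabla_M)$ given by $XY=\nabla_M{}_X Y$. The only point requiring care is the descent of $\nabla^+$ to $M$; once that is established the result is simply a specialization of the preceding proposition to the case in which the compact flat affine manifold happens to be a quotient of a flat affine Lie group by a discrete cocompact subgroup.
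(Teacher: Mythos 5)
Your proposal is correct and follows exactly the route the paper intends: the statement is given as an immediate corollary of the preceding proposition, once one notes that the left invariant connection descends (via translations by $D$ on the appropriate side, which are affine) to a flat affine connection on the compact quotient $M=G/D$. Your explicit verification of the descent and of compactness fills in precisely what the paper leaves implicit.
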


The following result follows from a theorem by Yano (see \cite{Y}, see also Corollary 3.9, page 244 in \cite{KN}).

\begin{corollary} The group of isometries $\mathfrak{I}(M,g)$ of a compact flat Riemannian manifold $(M,g)$, admits a flat affine bi-invariant structure as that of Theorem \ref{T:geodesicallycompletecase}.
\end{corollary}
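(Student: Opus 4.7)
The plan is to combine Yano's theorem with Theorem \ref{T:geodesicallycompletecase}, using the Levi-Civita connection as a bridge between the Riemannian and flat affine pictures.

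First I would observe that the Levi-Civita connection $\nabla$ of $g$ is torsion free, and since $(M,g)$ is flat Riemannian, its curvature tensor vanishes identically. Hence $(M,\nabla)$ is a flat affine manifold in the sense used throughout the paper. Moreover, the Hopf--Rinow theorem for Riemannian manifolds guarantees that any compact Riemannian manifold is geodesically complete, so $\nabla$ is a geodesically complete flat affine connection on $M$.

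Next I would apply Theorem \ref{T:geodesicallycompletecase} to $(M,\nabla)$: since $(M,\nabla)$ is connected, flat affine and geodesically complete, the Lie group $\mathsf{Aff}(M,\nabla)$ carries a flat affine bi-invariant structure, and fits in the split exact sequence of Lie groups with flat affine bi-invariant structures described in that theorem.

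Finally, the content of the cited theorem of Yano (Corollary 3.9, page 244 of \cite{KN}) is that on a compact Riemannian manifold every infinitesimal affine transformation of the Levi-Civita connection is an infinitesimal isometry, which combined with the identification of infinitesimal affine transformations with the Lie algebra of $\mathsf{Aff}(M,\nabla)$ gives a canonical identification $\mathfrak{I}(M,g)_0=\mathsf{Aff}(M,\nabla)_0$ (and equality of the full groups after taking into account that each isometry is affine). Transporting the bi-invariant flat affine structure constructed on $\mathsf{Aff}(M,\nabla)$ through this identification yields the desired structure on $\mathfrak{I}(M,g)$ of the type described in Theorem \ref{T:geodesicallycompletecase}. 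The only delicate point, and the step I expect to require the most care, is verifying that Yano's inclusion $\mathfrak{a}(M,\nabla)\subseteq \mathfrak{i}(M,g)$ is in fact an equality at the group level (not merely at the Lie algebra level), so that the bi-invariant structure really descends to all of $\mathfrak{I}(M,g)$ and not only to its identity component; this is what allows one to invoke Theorem \ref{T:geodesicallycompletecase} verbatim rather than only in a weakened form.
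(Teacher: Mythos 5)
Your route is the intended one: the paper offers no detailed proof, only the observation that the statement follows from Yano's theorem together with Theorem \ref{T:geodesicallycompletecase}, and your steps (Levi--Civita connection of a flat metric is a flat affine connection, compactness gives completeness by Hopf--Rinow, apply Theorem \ref{T:geodesicallycompletecase}, then use Yano to pass from $\mathsf{Aff}(M,\nabla)$ to $\mathfrak{I}(M,g)$) are exactly that argument spelled out.

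The one point to correct is the "delicate point" you flag at the end: the full-group equality $\mathfrak{I}(M,g)=\mathsf{Aff}(M,\nabla)$ that you hope to verify is in general \emph{false}, and fortunately it is also not needed. For the flat torus $\mathbb{R}^n/\mathbb{Z}^n$ with the standard metric, any non-orthogonal matrix in $GL_n(\mathbb{Z})$ induces an affine transformation of the Levi--Civita connection which is not an isometry, so $\mathfrak{I}(M,g)$ can be a proper subgroup of $\mathsf{Aff}(M,\nabla)$. What Yano's theorem gives is equality of Lie algebras, $\mathfrak{i}(M,g)=\mathfrak{a}(M,\nabla)$, hence equality of identity components. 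That suffices in either of two ways. First, since every isometry preserves $\nabla$, the group $\mathfrak{I}(M,g)$ is a subgroup of $\mathsf{Aff}(M,\nabla)$ containing its identity component, hence an open (union of components) subgroup; the flat affine bi-invariant connection produced by Theorem \ref{T:geodesicallycompletecase} simply restricts to this open subgroup, and left and right translations by elements of $\mathfrak{I}(M,g)$ still preserve it because they are among the translations of $\mathsf{Aff}(M,\nabla)$. Second, and closer to the paper's mechanism, Yano's theorem identifies $\mathrm{Lie}(\mathfrak{I}(M,g))$ with $\mathfrak{a}(M,\nabla)$, which by Lemma \ref{L:associativity} (all infinitesimal affine transformations being complete on the compact $M$) carries the associative product $XY=\nabla_XY$ whose commutator is the Lie bracket; Theorem \ref{T:associativeimplybiinvariantstructures} then yields the flat affine bi-invariant structure on $\mathfrak{I}(M,g)$ directly. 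So rather than strengthening Yano's statement to the group level, you should replace that last step by one of these two arguments; with that substitution your proof is complete.
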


Let us recall that another interesting case where there is a positive answer to Medina's question is the following (see Theorem 18 in \cite{MSG}).

\begin{theorem} Let $G=GL_n(\mathbb{K})$ with $\mathbb{K}=\mathbb{R}$ or the quaternions $\mathbb{H}$ and $\nabla^+$ be the flat affine bi-invariant connection on $G$ determined by composition of linear endomorphisms of $\mathbb{K}^n$. Then the group $\mathsf{Aff}(G,\nabla^+)$ admits a flat projective bi-invariant structure.
\end{theorem}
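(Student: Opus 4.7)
The plan is to first identify $\mathsf{Aff}(G,\nabla^+)$ explicitly as a classical matrix group, and then exhibit the flat projective bi-invariant structure via a tensor product embedding into a projective linear group.

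\textbf{Stage 1} (identification of the affine group). Because $\nabla^+$ on $G=GL_n(\mathbb{K})$ is the restriction of the canonical flat affine connection on the ambient vector space $M_n(\mathbb{K})$ (the one arising from composition of endomorphisms), its geodesics are segments of affine lines in $M_n(\mathbb{K})$ meeting $G$. Hence any affine transformation of $(G,\nabla^+)$ extends by analytic continuation to an affine bijection of $M_n(\mathbb{K})$ preserving the determinantal variety (equivalently, the reduced norm hypersurface in the quaternionic case). By the Frobenius--Dieudonn\'e classification of linear bijections preserving this variety, the connected component consists of maps $X\mapsto AXB$ with $(A,B)\in GL_n(\mathbb{K})\times GL_n(\mathbb{K})$; the translation part must vanish because the determinantal hypersurface is a proper algebraic cone through the origin. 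The kernel of the parametrization $(A,B)\mapsto (X\mapsto AXB)$ is the diagonal center $Z=\{(cI,c^{-1}I):c\in Z(\mathbb{K}^\times)\}$, so
\[
  \mathsf{Aff}(G,\nabla^+)_0\;\cong\;\bigl(GL_n(\mathbb{K})\times GL_n(\mathbb{K})\bigr)/Z,
\]
of dimension $2n^2-1$, matching the dimension recorded in the preceding example.

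\textbf{Stage 2} (construction of the projective structure). Consider the outer tensor representation
\[
  \Phi:GL_n(\mathbb{K})\times GL_n(\mathbb{K})\longrightarrow GL_{n^2}(\mathbb{K}),\qquad (A,B)\mapsto A\otimes B.
\]
The identity $(A_1\otimes B_1)(A_2\otimes B_2)=(A_1A_2)\otimes(B_1B_2)$ shows $\Phi$ is a Lie group homomorphism, and its kernel is precisely the diagonal center $Z$. Thus $\Phi$ descends to a Lie group embedding $\overline{\Phi}:\mathsf{Aff}(G,\nabla^+)_0\hookrightarrow PGL_{n^2}(\mathbb{K})$. Now $GL_{n^2}(\mathbb{K})$ carries its own canonical flat affine bi-invariant connection from composition of endomorphisms (Theorem \ref{T:associativeimplybiinvariantstructures} applied to the associative algebra $M_{n^2}(\mathbb{K})$); passing to the projective quotient by the center $\mathbb{K}^\ast$ induces a canonical flat projective bi-invariant connection on $PGL_{n^2}(\mathbb{K})$. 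Pull this structure back along $\overline{\Phi}$ to obtain the desired connection on $\mathsf{Aff}(G,\nabla^+)_0$.

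\textbf{Stage 3} (verification of bi-invariance and flat projectivity). Left-invariance of the pullback is automatic since $\overline{\Phi}$ is a Lie group homomorphism. Right-invariance follows because a bi-invariant connection on $PGL_{n^2}(\mathbb{K})$ is preserved by every inner automorphism, in particular by conjugation by elements of the image of $\overline{\Phi}$. Flat projectivity is a diffeomorphism-invariant property, hence preserved under pullback by a Lie embedding.

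\textbf{Main obstacle.} The delicate point lies in Stage 1, namely the Frobenius--Dieudonn\'e classification in the quaternionic case, where non-commutativity of $\mathbb{H}$ forces one to distinguish carefully between $\mathbb{H}$-linear endomorphisms acting on left and right and to account for quaternionic conjugation, which contributes an additional component to the full group $\mathsf{Aff}(G,\nabla^+)$ but not to $\mathsf{Aff}(G,\nabla^+)_0$. A secondary subtlety, for the reader unfamiliar with [MSG], is in checking that the central quotient of the flat affine bi-invariant connection on $GL_{n^2}(\mathbb{K})$ genuinely lands in the category of flat projective connections on $PGL_{n^2}(\mathbb{K})$; this reduces to observing that the radial flow generated by the center is projective (its orbits are reparametrizations of affine geodesics), so the projective geodesic structure descends unambiguously.
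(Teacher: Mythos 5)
The paper itself offers no proof of this statement (it is quoted from Theorem 18 of \cite{MSG}), so your argument has to stand on its own, and it does not: the decisive step, Stages 2--3, is invalid. Your Stage 1 is essentially sound and consistent with the paper's example giving $\dim\mathsf{Aff}(GL_n(\mathbb{R}),D)=2n^2-1$ (affine transformations of the open set $GL_n(\mathbb{K})\subset M_n(\mathbb{K})$ extend to affine bijections of $M_n(\mathbb{K})$ preserving the singular locus, and on the identity component are $X\mapsto AXB$). But there is no operation of ``pulling back'' a connection or a flat projective structure along the embedding $\overline{\Phi}:\mathsf{Aff}(G,\nabla^+)_0\hookrightarrow PGL_{n^2}(\mathbb{K})$: the image has positive codimension ($2n^2-1$ against $n^4-1$ real dimensions when $\mathbb{K}=\mathbb{R}$), a flat projective structure on an $m$-dimensional group is an atlas modeled on $\mathbb{RP}^{m}$ with chart changes in $PGL_{m+1}(\mathbb{R})$, and such a structure on an ambient group induces one on a subgroup only when the subgroup is totally geodesic (projective) inside it, which you neither assert nor verify. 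The sentence ``flat projectivity is a diffeomorphism-invariant property, hence preserved under pullback by a Lie embedding'' confuses invariance under diffeomorphisms of a fixed manifold with restriction to lower-dimensional submanifolds; the affine analogue of your step is demonstrably false (e.g.\ $SO(3)\subset GL_3(\mathbb{R})$ sits inside a group carrying a flat affine bi-invariant connection, yet, being closed with finite fundamental group, admits no flat affine structure at all, cf.\ \cite{Aus}). A secondary error: over $\mathbb{H}$ the Kronecker map $(A,B)\mapsto A\otimes B$ is not a homomorphism, since $(A_1\otimes B_1)(A_2\otimes B_2)=(A_1A_2)\otimes(B_1B_2)$ uses commutativity of entries, and the center of $GL_N(\mathbb{H})$ is $\mathbb{R}^{*}$, not $\mathbb{H}^{*}$.

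What the theorem actually needs is a projective \'etale model in the correct dimension, equivariant under translations on both sides. Parametrize $H=\mathsf{Aff}(G,\nabla^+)_0$ by $(A,B)\mapsto(X\mapsto AXB^{-1})$, so that $H\cong\bigl(GL_n(\mathbb{K})\times GL_n(\mathbb{K})\bigr)/\{(cI,cI):c\in\mathbb{R}^{*}\}$, and consider the real projectivization $\mathbb{P}\bigl(M_n(\mathbb{K})\oplus M_n(\mathbb{K})\bigr)$, whose dimension equals $\dim H$ (namely $2n^2-1$ for $\mathbb{R}$ and $8n^2-1$ for $\mathbb{H}$). The map $[(A,B)]\mapsto[(A,B)]$ is then a well-defined injective open embedding of $H$ into this projective space, and left and right translations by $[(A_0,B_0)]$ are induced by the $\mathbb{R}$-linear maps $(X,Y)\mapsto(A_0X,B_0Y)$ and $(X,Y)\mapsto(XA_0,YB_0)$, hence extend to projective transformations. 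An open, bi-translation-equivariant embedding into $\mathbb{RP}^{\dim H}$ is precisely a flat projective bi-invariant structure on $H$ (the non-identity components, involving transpose- or conjugate-transpose-type maps, require a separate check). Your representation $A\otimes B$ cannot play this role even over $\mathbb{R}$: its projectivization has dimension $n^2-1\neq\dim H$ and the isotropy of the induced action is far from discrete.
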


\section{  Appendix}

\subsection{Miscellaneous Results} In this subsection we state some results, without proof, consequence of the previous work. The reader can easily verify them.


\begin{proposition} \label{P:envelopingcontainthealgebra} Let $(G,\nabla^+)$ be a flat affine Lie group of finite dimension and $\mathfrak{a}(G,\nabla^+)$ (respectively $\mathsf{aff}(G,\nabla^+)$)  be the real vector space of infinitesimal affine transformations of $(G,\nabla^+)$ (respectively  complete). Then we have the following:
	\begin{enumerate}
		\item[(a)] $\mathfrak{a}(G,\nabla^+)$ with the product defined by $\nabla^+$ is an associative algebra  of dimension at most $n^2+n$. 
		\item[(b)] Let $G'$ be an enveloping Lie group of $(G,\nabla^+)$, then $G$ is a Lie subgroup of $G'$ and $\mathsf{Aff}(G,\nabla^+)$ is a Lie subgroup of $G'^{op}$. Moreover $G'$  is equipped with a flat affine bi-invariant structure $\nabla'$. The Lie bracket of the Lie algebra of $G'^{op}$ is given by the commutator of the product determined by $\nabla^+$. 
	\end{enumerate} 
\end{proposition}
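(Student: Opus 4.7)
My plan is to reduce the whole proposition to the machinery already established in Section \ref{S:associativeenvelope}, namely Lemma \ref{L:associativity}, Theorem \ref{T:associativeimplybiinvariantstructures}, Theorem \ref{T:existenceofenvelope}, and Definition \ref{D:associativeenvelope}. No new computation should be required; the content is essentially the packaging of these results into the language of enveloping Lie groups.

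For part (a), I would simply invoke Lemma \ref{L:associativity} applied to the flat affine manifold $(M,\nabla) = (G,\nabla^+)$. That lemma gives directly that $\mathfrak{a}(G,\nabla^+)$ is an associative algebra under $XY := \nabla^+_X Y$, whose commutator is the usual bracket of vector fields, and whose dimension is at most $n^2 + n$ (this bound coming from the faithful action on the parallelism of $P = L(G)$, whose tangent bundle has rank $n^2 + n$, as recalled in the introduction).

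For part (b), I would proceed as follows. By Definition \ref{D:associativeenvelope}(a)-(b), an enveloping Lie group $G'$ of $(G,\nabla^+)$ is a connected Lie group whose Lie algebra is $Env(\mathfrak{g})_{-}$, where $Env(\mathfrak{g}) = \mathcal{B}^{op}$ and $\mathcal{B}$ is the associative subalgebra of $\mathfrak{a}(G,\nabla^+)$ generated by $\mathsf{aff}(G,\nabla^+)$. Since $Env(\mathfrak{g})$ is associative, Theorem \ref{T:associativeimplybiinvariantstructures} immediately furnishes a flat affine bi-invariant connection $\nabla'$ on $G'$. Next, Theorem \ref{T:existenceofenvelope}(2) provides the Lie algebra embedding $\mathfrak{g}_{-} \hookrightarrow Env(\mathfrak{g})_{-} = \mathrm{Lie}(G')$, which integrates (passing to simply connected covers if needed, as in the remark preceding Theorem \ref{T:rhointheLiegroupscase}) to a Lie subgroup $G \hookrightarrow G'$. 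To locate $\mathsf{Aff}(G,\nabla^+)$ inside $G'^{op}$, I note that by construction $\mathsf{aff}(G,\nabla^+) \subseteq \mathcal{B} = Env(\mathfrak{g})^{op} = \mathrm{Lie}(G'^{op})$; taking commutators gives the Lie subalgebra inclusion $\mathsf{aff}(G,\nabla^+) \hookrightarrow \mathcal{B}_{-}$, which integrates to a Lie subgroup $\mathsf{Aff}(G,\nabla^+) \hookrightarrow G'^{op}$. Finally, the last assertion that the bracket on $\mathrm{Lie}(G'^{op}) = \mathcal{B}_{-}$ is the commutator of the product determined by $\nabla^+$ is immediate, since the product on $\mathcal{B} \subseteq \mathfrak{a}(G,\nabla^+)$ is the restriction of $XY = \nabla^+_X Y$ by definition.

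The only real subtlety, rather than a genuine obstacle, is the opposite-algebra bookkeeping: the product induced by $\nabla^+$ naturally lives on $\mathcal{B}$, whereas the enveloping Lie group $G'$ was defined so that its Lie algebra is $Env(\mathfrak{g})_{-} = (\mathcal{B}^{op})_{-}$. This is why $G$ embeds in $G'$ but $\mathsf{Aff}(G,\nabla^+)$ naturally appears inside $G'^{op}$, consistent with the fact (highlighted in Lemma \ref{L:associativity}) that the complete right invariant vector fields of $G$ produce a copy of $\mathfrak{g}^{op}$ inside $\mathsf{aff}(G,\nabla^+)_{-}$. Keeping this sign/opposite convention consistent throughout the statement is the only bookkeeping point that needs care.
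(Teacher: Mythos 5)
Your proposal is correct and follows exactly the route the paper intends: the paper states this proposition without proof, in a subsection explicitly presenting consequences of the previous section, and your assembly of Lemma \ref{L:associativity}, Definition \ref{D:associativeenvelope}, Theorem \ref{T:existenceofenvelope} and Theorem \ref{T:associativeimplybiinvariantstructures} (with the careful $\mathcal{B}$ versus $\mathcal{B}^{op}$ bookkeeping) is precisely that derivation. The only caveat, which affects the statement as much as your argument, is that integrating the subalgebra inclusions only yields connected subgroups locally isomorphic to $G$ and to $\mathsf{Aff}(G,\nabla^+)_0$, a looseness the paper itself acknowledges elsewhere by writing ``locally isomorphic.''
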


The next result is interesting in its own.

\begin{lemma} Consider a connected manifold  $M$ endowed with a flat affine connection $\nabla$. 
	\begin{enumerate} 
		\item[i.] Any set $S$ of infinitesimal affine transformations of $M$ relative to $\nabla$ determines a unique real finite dimensional associative algebra $\mathcal{A}(S)$ with a product defined by $\nabla$.
		\item[ii.] The Lie algebra of commutators $\mathcal{A}(S)_-$ is of dimension less than or equal to $n^2+n$.
	\end{enumerate}
\end{lemma}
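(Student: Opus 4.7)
The plan is to reduce both assertions to Lemma \ref{L:associativity} together with the ambient dimension bound proved there. That lemma tells us that, because $\nabla$ is flat affine, the product $X\cdot Y:=\nabla_X Y$ restricts to a well defined product on $\mathfrak{a}(M,\nabla)$ and turns this real vector space into a finite dimensional associative algebra whose commutator coincides with the Lie bracket of vector fields, with $\dim\mathfrak{a}(M,\nabla)\leq n^2+n$. This is the only input I would need.

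For part (i), given any subset $S\subseteq\mathfrak{a}(M,\nabla)$, I would define $\mathcal{A}(S)$ to be the intersection of all associative subalgebras of $(\mathfrak{a}(M,\nabla),\cdot)$ that contain $S$. Equivalently, $\mathcal{A}(S)$ is the linear span of all monomials $X_{i_1}\cdot X_{i_2}\cdots X_{i_k}$ with $k\geq 1$ and $X_{i_j}\in S$. Both descriptions make sense precisely because $\mathfrak{a}(M,\nabla)$ is stable under the product $\nabla_X Y$ (this is the content of Equation \eqref{Eq:infinitesimalaffinetransformationsonflataffineconnections} applied in the proof of Lemma \ref{L:associativity}). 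The intersection of any family of associative subalgebras is again an associative subalgebra, so $\mathcal{A}(S)$ exists and is unique, and associativity is inherited from the ambient algebra. Finite dimensionality is automatic since $\mathcal{A}(S)$ is a subspace of the finite dimensional $\mathfrak{a}(M,\nabla)$.

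For part (ii), the commutator Lie algebra $\mathcal{A}(S)_-$ is by construction the same underlying vector space as $\mathcal{A}(S)$, equipped with the bracket $[a,b]=a\cdot b-b\cdot a$. Therefore
\[
\dim\mathcal{A}(S)_- \;=\; \dim\mathcal{A}(S) \;\leq\; \dim\mathfrak{a}(M,\nabla) \;\leq\; n^2+n,
\]
where the last inequality is the bound recorded in Lemma \ref{L:associativity}.

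There is no real obstacle here: the statement is essentially a corollary of Lemma \ref{L:associativity}, and the only point that deserves to be highlighted is the closure property $X,Y\in\mathfrak{a}(M,\nabla)\Rightarrow \nabla_X Y\in\mathfrak{a}(M,\nabla)$, which is exactly what makes the iterated products defining $\mathcal{A}(S)$ land inside $\mathfrak{a}(M,\nabla)$ and thereby ensures both the existence of $\mathcal{A}(S)$ and the dimension estimate $\dim\mathcal{A}(S)_-\leq n^2+n$.
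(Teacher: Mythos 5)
Your argument is correct and is exactly the deduction the paper intends: this lemma is stated in the appendix without proof as an easy consequence of the earlier material, namely Lemma \ref{L:associativity}, which supplies both the closure of $\mathfrak{a}(M,\nabla)$ under $X\cdot Y=\nabla_XY$ and the bound $\dim\mathfrak{a}(M,\nabla)\leq n^2+n$. Taking $\mathcal{A}(S)$ to be the smallest associative subalgebra containing $S$ (equivalently, the span of iterated products of elements of $S$) and reading off the dimension estimate for $\mathcal{A}(S)_-$ is precisely the intended verification.
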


\begin{proposition} Let $(G,\nabla^+)$ be a flat affine Lie group. The Lie group $\mathsf{Aff}(G,\nabla^+)$ is locally isomorphic to a Lie subgroup of an enveloping Lie group $Env(G,\nabla^+)$ endowed with a flat affine bi-invariant structure induced by $\nabla^+$. 
\end{proposition}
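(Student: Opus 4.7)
My plan is to deduce this proposition from Theorem \ref{T:envelopeofaflataffinemanifold} applied to the flat affine manifold $(M,\nabla)=(G,\nabla^+)$, combined with the identification of the resulting ambient Lie group with an enveloping Lie group in the sense of Definition \ref{D:associativeenvelope}. First, I would recall the setup: by Lemma \ref{L:associativity}, the real vector space $\mathfrak{a}(G,\nabla^+)$ of infinitesimal affine transformations is a finite-dimensional associative algebra under the product $XY:=\nabla^+_XY$, and its commutator agrees with the Lie bracket of vector fields on $G$ (since $\nabla^+$ is torsion-free). Let $\mathcal{B}$ be the associative subalgebra of $\mathfrak{a}(G,\nabla^+)$ generated by $\mathsf{aff}(G,\nabla^+)$; by Definition \ref{D:associativeenvelope}, $Env(G,\nabla^+)=\mathcal{B}^{op}$, and an enveloping Lie group of $(G,\nabla^+)$ is a Lie group whose Lie algebra is $Env(G,\nabla^+)_-$.

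Next, I would invoke Theorem \ref{T:envelopeofaflataffinemanifold} with $(M,\nabla)=(G,\nabla^+)$. This produces a connected Lie group $\widetilde{G}$ endowed with a flat affine bi-invariant structure, together with a connected Lie subgroup $H\subseteq\widetilde{G}$ whose Lie algebra is $\mathsf{aff}(G,\nabla^+)$ and which is locally isomorphic to $\mathsf{Aff}(G,\nabla^+)$. Tracing the construction (the subalgebra $E$ in the proof of Theorem \ref{T:envelopeofaflataffinemanifold} coincides with $\mathcal{B}$), the Lie algebra of $\widetilde{G}$ is $\mathcal{B}_-$. Since $\mathcal{B}_-$ and $(\mathcal{B}^{op})_-=Env(G,\nabla^+)_-$ are anti-isomorphic, hence isomorphic as abstract Lie algebras via $x\mapsto-x$, the Lie group $\widetilde{G}$ is locally isomorphic to an enveloping Lie group of $(G,\nabla^+)$. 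Composing this local isomorphism with the one already provided by Theorem \ref{T:envelopeofaflataffinemanifold} realizes $\mathsf{Aff}(G,\nabla^+)$ as locally isomorphic to a Lie subgroup of $Env(G,\nabla^+)$, as desired.

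For the final clause, that the bi-invariant flat affine connection on $Env(G,\nabla^+)$ is \emph{induced by} $\nabla^+$: by Theorem \ref{T:associativeimplybiinvariantstructures}, the bi-invariant flat affine structure on any Lie group with Lie algebra $Env(G,\nabla^+)_-$ is governed by the associative product of $Env(G,\nabla^+)$, and this product is built directly from the restriction to $\mathsf{aff}(G,\nabla^+)$ of the product $XY=\nabla^+_XY$ on $\mathfrak{a}(G,\nabla^+)$. Thus the bi-invariant connection descends from $\nabla^+$ itself.

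The main subtlety I expect is a purely bookkeeping one: keeping straight the distinction between $\mathcal{B}$ and $\mathcal{B}^{op}$ (equivalently between the Lie group manufactured in Theorem \ref{T:envelopeofaflataffinemanifold} and a Lie group literally having Lie algebra $Env(G,\nabla^+)_-$). This is resolved by the Lie algebra isomorphism $x\mapsto-x$, which on the group level is implemented by inversion, and which preserves the class of Lie subgroups up to local isomorphism; no deeper analytic difficulty arises.
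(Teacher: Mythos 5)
Your argument is correct and takes essentially the route the paper intends: this proposition is stated in the appendix without proof as a consequence of the earlier results, namely Lemma \ref{L:associativity}, Theorem \ref{T:envelopeofaflataffinemanifold}, Definition \ref{D:associativeenvelope} and Theorem \ref{T:associativeimplybiinvariantstructures}, which is exactly the chain you assemble. Your explicit handling of the $\mathcal{B}$ versus $\mathcal{B}^{op}$ bookkeeping via the Lie algebra isomorphism $x\mapsto -x$ (inversion on the group level) correctly reconciles the group built in Theorem \ref{T:envelopeofaflataffinemanifold} with an enveloping Lie group in the sense of Definition \ref{D:associativeenvelope}(b).
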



\begin{proposition} Let $(G,\nabla^+)$ be a flat affine Lie group and $H$ a Lie group endowed with a flat bi-invariant structure. If $\rho:G\longrightarrow Int(H)$ is a representation of Lie groups then the Lie group $H\rtimes_\rho G$ is endowed with a natural flat affine left invariant structure so that the sequence
	\[ \{\epsilon\}\longrightarrow H\longrightarrow H\rtimes_\rho G\longrightarrow G\longrightarrow \{\epsilon\}\]
	is an exact sequence of flat affine Lie groups. 
\end{proposition}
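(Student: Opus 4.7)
The plan is to construct a left symmetric product $\bullet$ on $\mathfrak{h}\oplus\mathfrak{g}$, where $\mathfrak{h}=$Lie$(H)$ and $\mathfrak{g}=$Lie$(G)$, compatible with the semi-direct Lie bracket of Lie$(H\rtimes_\rho G)$, and then invoke the correspondence between LSA structures on Lie$(K)$ and flat affine left invariant connections on $K$ recalled in the introduction.

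By Theorem \ref{T:associativeimplybiinvariantstructures}, the bi-invariant flat affine connection on $H$ endows $\mathfrak{h}$ with an associative product $*$ such that $[a,b]_{\mathfrak{h}}=a*b-b*a$, while the flat affine left invariant connection on $G$ endows $\mathfrak{g}$ with an LSA product $\cdot$ such that $[x,y]_{\mathfrak{g}}=x\cdot y-y\cdot x$. Differentiating $\rho$ yields a Lie algebra homomorphism $\delta:=d\rho_\varepsilon:\mathfrak{g}\to \mathsf{ad}(\mathfrak{h})$, since $\rho$ takes values in $Int(H)$. The key observation is that each $\delta_x$ is actually a derivation of the associative product $*$: a one-line computation using the associativity of $*$ gives, for any $a\in\mathfrak{h}$,
\[ \mathsf{ad}_a(b*c)=a*(b*c)-(b*c)*a=(a*b)*c-b*(c*a)=\mathsf{ad}_a(b)*c+b*\mathsf{ad}_a(c),\]
so every element of $\mathsf{ad}(\mathfrak{h})$ is a derivation of $(\mathfrak{h},*)$. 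Conceptually, each conjugation $C_h=L_h\circ R_{h^{-1}}$ preserves the bi-invariant connection, so $\mathsf{Ad}(h)$ is an automorphism of $(\mathfrak{h},*)$ and its infinitesimal generator $\mathsf{ad}_a$ is a derivation.

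With $\delta_x\in Der(\mathfrak{h},*)$ in hand, define $\bullet$ on $\mathfrak{h}\oplus\mathfrak{g}$ by
\[ (a,x)\bullet(b,y):=\bigl(a*b+\delta_x(b),\ x\cdot y\bigr).\]
A direct check shows that its commutator recovers the semi-direct product Lie bracket on $\mathfrak{h}\rtimes_\delta\mathfrak{g}=$Lie$(H\rtimes_\rho G)$. Left symmetry of $\bullet$ reduces to verifying that the associator $(X,Y,Z)_\bullet:=(X\bullet Y)\bullet Z-X\bullet(Y\bullet Z)$ is symmetric in $X,Y$: upon expansion, the purely $\mathfrak{h}$-component terms vanish by associativity of $*$, the $\mathfrak{g}$-component is the associator of $\cdot$ (symmetric by hypothesis), the crossed terms of the shape $(\delta_x b)*c-\delta_x(b*c)+b*(\delta_x c)$ vanish because $\delta_x$ is a derivation of $*$, and the remaining mixed terms collapse to $\delta_{[x,y]}-[\delta_x,\delta_y]$, which vanishes because $\delta$ is a Lie algebra homomorphism.

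Thus $(\mathfrak{h}\oplus\mathfrak{g},\bullet)$ is an LSA whose commutator is Lie$(H\rtimes_\rho G)$, yielding the announced flat affine left invariant connection $\widetilde{\nabla}$. Because $(a,0)\bullet(b,0)=(a*b,0)$ and $(0,x)\bullet(0,y)=(0,x\cdot y)$, the inclusion $\mathfrak{h}\hookrightarrow\mathfrak{h}\oplus\mathfrak{g}$ is an embedding of associative algebras, while both the projection onto $\mathfrak{g}$ and the splitting $x\mapsto(0,x)$ are morphisms of LSAs. Integrating these to the simply connected groups produces the claimed split exact sequence of flat affine Lie groups. I expect the main obstacle to be the left symmetry verification, whose cancellations depend crucially on $\rho$ factoring through $Int(H)$: for a representation into the full group $Aut(H)$ the operators $\delta_x$ would only be Lie algebra derivations of $\mathfrak{h}$ and could fail to preserve $*$, at which point the crossed associator terms would no longer cancel.
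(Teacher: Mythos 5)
Your proof is correct, and it cannot diverge from ``the paper's proof'' because the paper gives none: this proposition sits in the appendix subsection whose results are stated explicitly without proof. Your argument is the natural way to fill that gap using the paper's own machinery: Theorem \ref{T:associativeimplybiinvariantstructures} turns the flat affine bi-invariant structure on $H$ into an associative product $*$ on $\mathfrak{h}$, the left invariant structure on $G$ gives the left symmetric product $\cdot$ on $\mathfrak{g}$, and the hypothesis $\rho(G)\subseteq Int(H)$ enters exactly where you say it does, since $d\rho$ then lands in $\mathsf{ad}(\mathfrak{h})$ and inner Lie derivations are automatically derivations of the associative product (your one-line associativity computation, or equivalently the observation that conjugations are affine for a bi-invariant connection). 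The associator computation for $(a,x)\bullet(b,y)=(a*b+\delta_x(b),\,x\cdot y)$ checks out: the antisymmetric part of the $\mathfrak{h}$-component reduces, via the derivation property, to $\delta_{[x,y]}-[\delta_x,\delta_y]=0$, and the $\mathfrak{g}$-component is the associator of $\cdot$. One small cosmetic point: there is no need to ``integrate to the simply connected groups.'' A left symmetric product on $\mathrm{Lie}(K)$ defines a flat affine left invariant connection on $K$ itself by left translation, so $\bullet$ equips $H\rtimes_\rho G$ directly, and since the differentials of the inclusion of $H$, the projection onto $G$, and the canonical section are morphisms for the relevant products, these homomorphisms are affine for the left invariant connections involved; this yields the (split) exact sequence of flat affine Lie groups as stated, with no covering argument.
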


\subsection{Case $\mathsf{Aff}(\mathbb{R})_0$}
\noindent This subsection is devoted to the study of enveloping Lie groups of the affine group $(\mathsf{Aff}(\mathbb{R})_0,\nabla^+)$ where $\nabla^+$ is any of the flat affine left invariant linear connections listed in Section 3 of \cite{MSG}. 



Recall that the underlying Lie algebra $\mathfrak{a}(G,\nabla^+)_-$ of the associative algebra $\mathfrak{a}(G,\nabla^+)$  of infinitesimal affine transformations of a flat affine Lie group $(G,\nabla^+)$ contains the Lie algebra of right invariant vector fields. Hence, in view of Proposition  \ref{P:envelopingcontainthealgebra}, the associative envelope of $(\mathfrak{g},\cdot)$ is isomorphic to the associative envelope of $(\mathfrak{g}^{op},\cdot^{op}),$ where $\cdot^{op}$ is the product opposite to the product on $\mathfrak{g}$.


\begin{example} \label{R:continuation} Consider the product on $\mathsf{aff}(\mathbb{R})=span\{e_1,e_2\}$   defined by Equation \eqref{Eq:productodeterminadopornablacasoR1} and let $\nabla^+$ be the corresponding flat affine left invariant connection on $\mathsf{Aff}(\mathbb{R})$. The associative envelope $Env(\mathsf{aff}(\mathbb{R}),\cdot)$ of the left symmetric algebra $\mathsf{aff}(\mathbb{R})$ under this product is the algebra with linear basis 
	$$e_1^-,e_2^-,C_3,C_4,C_5$$ with produt given by the opposite of the product  in Table \ref{Tab:tabla1}.
	
	The  corresponding connected and simply connected enveloping  Lie group of $(\mathsf{Aff}(\mathbb{R}),\nabla^+)$  is given by  $\mathcal{H}_3\rtimes_{\rho} \mathbb{R}^2 $, where $\mathcal{H}_3$ is the 3-dimensional Heisenberg Lie group and  $\rho$ is given in Equation \ref{Eq:actiononheisenberg}.
\end{example}




Let $\alpha$ be a real parameter and
$\nabla_1(\alpha)$, $\nabla_2(\alpha)$, $\nabla_1$, or $\nabla_2$ be the flat affine left invariant connections  on $\mathsf{Aff}(\mathbb{R})_0$ described in \cite{MSG} (page 198).  In the following proposition, whose proof is left to the reader, $\nabla^+$ denotes anyone  of these connections.


\begin{proposition} Let $G=\mathsf{Aff}(R)_0$ endowed with the connection $\nabla^+$ and $(\mathfrak{g},\cdot)$ the corresponding left symmetric algebra, then we have
	\begin{enumerate}[(a)]
		\item  The algebra $\mathsf{aff}(G,\nabla^+)$ is an associative algebra with the product determined by $\nabla^+$. The corresponding underlying Lie algebra, $\mathsf{aff}(G,\nabla^+)_-$, contains $\mathfrak{g}$ as a Lie subalgebra, therefore $\mathsf{aff}(G,\nabla^+)$ is the associative envelope of  $(\mathfrak{g},\cdot)$.
		\item The associative  product on $\mathsf{aff}(G,\nabla^+)$ given in i.,  naturally determines a flat affine bi-invariant structure $\nabla$ on $\mathsf{Aff}(G,\nabla^+)$.
		\item The connection $\nabla_1$ is bi-invariant and it induces a flat affine bi-invariant structure $\nabla'$ on the opposite Lie group $G^{op}$.
		\item The flat affine Lie group $\mathsf{Aff}(G,\nabla^+)$ contains $G$ as a normal Lie subgroup which is totally geodesic  relative to $\nabla$, the Lie group $\mathsf{Aff}(G,\nabla^+)/G$ is isomorphic to $G^{op}$ and the natural sequence 
		\[ 1\longrightarrow (G,\nabla_1)\longrightarrow (\mathsf{Aff}(G,\nabla^+),\nabla)\longrightarrow (G^{op},\nabla')\longrightarrow 1\]
		is an exact sequence of affine Lie groups where the affine structures are bi-invariant and  $\nabla'$ is determined by the product $\cdot^{op}$ opposite to the product on $\mathfrak{g}$. 	\end{enumerate}
\end{proposition}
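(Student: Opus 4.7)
The plan is to verify the four assertions in parallel for each of the four families $\nabla_1(\alpha)$, $\nabla_2(\alpha)$, $\nabla_1$, $\nabla_2$ classified in \cite{MSG}, by a direct coordinate computation on $G=\mathsf{Aff}(\mathbb{R})_0$. I would fix the standard coordinates $(x,y)$ on $G$, write the corresponding bases of left- and right-invariant vector fields, and solve the Kobayashi equation \eqref{Eq:ecuacionkobayashi} (equivalently \eqref{Eq:infinitesimalaffinetransformationsonflataffineconnections}) for each $\nabla^+$ to obtain an explicit basis of $\mathfrak{a}(G,\nabla^+)$. Completeness of each basis vector field is then checked by integrating its flow, separating those with global flow from those with finite escape time, exactly in the spirit of Examples \ref{Ex:productoR1} and \ref{Ex:genericcase}. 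The output of this step is, for each of the four connections, a basis of $\mathsf{aff}(G,\nabla^+)$ together with the multiplication table of the product $XY:=\nabla^+_XY$.

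For part (a), associativity of the product on $\mathfrak{a}(G,\nabla^+)$ is automatic by Lemma \ref{L:associativity}, so it remains to verify that the subspace $\mathsf{aff}(G,\nabla^+)$ of complete fields is closed under this product; this is read off from the table in each case. Every right-invariant vector field is a complete infinitesimal affine transformation, which realizes $\mathfrak{g}^{op}$ (hence $\mathfrak{g}$ as an abstract Lie algebra) as a Lie subalgebra of $\mathsf{aff}(G,\nabla^+)_-$; by inspection of the multiplication table, the right-invariant fields already generate the whole associative algebra, giving $\mathsf{aff}(G,\nabla^+)=Env(\mathfrak{g},\cdot)$ and proving (a). Part (b) is then immediate from Theorem \ref{T:associativeimplybiinvariantstructures} applied to the associative algebra $\mathsf{aff}(G,\nabla^+)$, which is precisely $\mathsf{Lie}(\mathsf{Aff}(G,\nabla^+))$. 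For part (c), the connection $\nabla_1$ is checked to be bi-invariant directly from the list in \cite{MSG} (equivalently, its defining product on $\mathfrak{g}$ is already associative with commutator equal to the Lie bracket), and $\nabla'$ is obtained by transporting $\nabla_1$ to $G^{op}$ via the diffeomorphism $\sigma\mapsto\sigma^{-1}$.

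For part (d), the strategy is to read off from the multiplication table the fact that $\mathfrak{g}\subset\mathsf{aff}(G,\nabla^+)$ is a two-sided associative ideal, i.e.\ $\mathfrak{g}\cdot\mathsf{aff}(G,\nabla^+)\subset\mathfrak{g}$ and $\mathsf{aff}(G,\nabla^+)\cdot\mathfrak{g}\subset\mathfrak{g}$; this integrates to a closed normal Lie subgroup of $\mathsf{Aff}(G,\nabla^+)$, which is identified with $G$ under the inclusion $G\hookrightarrow \mathsf{Aff}(G,\nabla^+)$ by left translations. Because the ideal is an associative subalgebra, the restriction of the bi-invariant $\nabla$ to $G$ is a bi-invariant connection; matching the commutator tables shows this restriction agrees with $\nabla_1$, so $G$ is totally geodesic in $(\mathsf{Aff}(G,\nabla^+),\nabla)$. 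The quotient associative algebra is two-dimensional, and computing its commutator shows the underlying Lie algebra is $\mathfrak{g}^{op}$, so Theorem \ref{T:associativeimplybiinvariantstructures} yields the bi-invariant $\nabla'$ on the quotient and the sequence becomes exact in the category of flat affine bi-invariant Lie groups. The splitting is exhibited by a concrete associative complement of $\mathfrak{g}$ in $\mathsf{aff}(G,\nabla^+)$ that projects isomorphically onto the quotient. The main obstacle is precisely this last point: the case-by-case verification that such an associative complement exists for each of the four connection families and that the projection preserves the associative product, since this is what makes the apparently uniform statement of (d) hold, and it can fail at the level of linear complements that are not associative subalgebras.
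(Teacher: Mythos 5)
The paper gives no proof of this proposition (it is explicitly left to the reader), so I judge your argument on its own merits. Your general scheme --- explicit bases of $\mathfrak{a}(G,\nabla^+)$ for each of the four families, a completeness check, the multiplication table of $XY=\nabla^+_XY$, then Lemma \ref{L:associativity} for associativity and Theorem \ref{T:associativeimplybiinvariantstructures} for part (b) --- is the natural one and carries (a), (b), (c) through, except for one incidental false claim: in the generic case of Example \ref{Ex:genericcase} the right-invariant fields $e_1^-=x\partial_x+y\partial_y=\alpha C_1+C_3$ and $e_2^-=\partial_y=C_4$ do \emph{not} generate the whole algebra (they generate only $\mathrm{span}(C_1,C_3,C_4)$). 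That claim is not needed for (a), since by Definition \ref{D:associativeenvelope} the envelope is the (opposite of the) subalgebra generated by all of $\mathsf{aff}(G,\nabla^+)$, and closure of $\mathsf{aff}(G,\nabla^+)$ under the product suffices; but the same misidentification becomes fatal in (d).

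In (d) you take the copy of $G$ inside $\mathsf{Aff}(G,\nabla^+)$ to be the left translations, whose Lie algebra is the span of the right-invariant fields, and you claim this span is a two-sided associative ideal on which $\nabla$ restricts to $\nabla_1$. This fails for $\nabla^+=\nabla_1(\alpha)$ with $\alpha\neq 1$: from the table of Example \ref{Ex:genericcase}, $\nabla^+_{e_1^-}e_1^-=\alpha^2C_1+C_3=\alpha x\partial_x+y\partial_y$, which lies in $\mathrm{span}(e_1^-,e_2^-)$ only when $\alpha=1$, so that copy of $\mathfrak{g}$ is not even closed under the product (hence not totally geodesic); moreover $[C_2,e_1^-]=(1-\alpha)C_2\notin\mathrm{span}(e_1^-,e_2^-)$, so it is not a Lie ideal and the left-translation subgroup is not normal. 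The normal, totally geodesic subgroup asserted in the statement must be a different subgroup isomorphic to $G$: in that example the relevant two-sided associative ideal is $\mathrm{span}(C_1,C_2)=\mathrm{span}\bigl(x\partial_x,\ x^\alpha\partial_y\bigr)$, whose induced product ($C_1C_1=C_1$, $C_1C_2=C_2$, $C_2C_1=C_2C_2=0$) is associative with commutator the $\mathsf{aff}(\mathbb{R})$ bracket, i.e.\ it carries the bi-invariant structure $\nabla_1$ --- which also explains why the kernel in the exact sequence is $(G,\nabla_1)$ independently of which $\nabla^+$ one starts from, and the quotient algebra $\mathrm{span}(\bar C_3,\bar C_4)$ is the opposite algebra, giving $(G^{op},\nabla')$. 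Your proposal never produces this subgroup, and the step ``matching the commutator tables shows this restriction agrees with $\nabla_1$'' cannot be carried out for the left-translation copy; to repair (d) you must locate, in each of the four families, the correct associative ideal and identify the corresponding connected normal subgroup of $\mathsf{Aff}(G,\nabla^+)$ with $G$ abstractly, rather than with the canonical inclusion by left translations.
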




\subsection{Enveloping group of $(\mathsf{Aff}(\mathbb{R}),\nabla_3)$ as a group of transformations of the bundle of linear frames $L(\mathsf{Aff}(\mathbb{R}))$ of $\mathsf{Aff}(\mathbb{R})$} 
A linear basis for the Lie algebra of infinitesimal affine transformations of $M=\mathsf{Aff}(\mathbb{R})$ relative to $\nabla_3$ is given in Example \ref{Ex:productoR1}. The associative envelope of the corresponding left symmetric algebra is generated by the first five elements in this basis. The corresponding affine transformations of $L(M)$ determined by these vector fields, in local coordinates $(x,y,X_{11},X_{12},X_{21},X_{22})$, are given by 
\begin{align*}\phi_{1,t}&=(xe^t,ye^t,X_{11}e^t,X_{12}e^t,X_{21}e^t,X_{22}e^t),\\ 
\phi_{2,t}&=(x,y+t,X_{11},X_{12},X_{21},X_{22})\\
\phi_{3,t}&=\left(\sqrt{2t+x^2},y,\frac{xX_{11}}{\sqrt{2t+x^2}},\frac{xX_{12}}{\sqrt{2t+x^2}},X_{21},X_{22}\right)\\
\phi_{4,t}&=\left(\sqrt{2yt+x^2},y,\frac{xX_{11}+tX_{21}}{\sqrt{2yt+x^2}},\frac{xX_{12}+tX_{22}}{\sqrt{2yt+x^2}},X_{21},X_{22}\right)\\
\phi_{5,t}&=\left(\sqrt{(x^2+y^2)e^{2t}-y^2},y,\frac{(xX_{11}+yX_{21})e^{2t}-yX_{21}}{\sqrt{(x^2+y^2)e^{2t}-y^2}},\frac{(xX_{12}+yX_{22})e^{2t}-yX_{22}}{\sqrt{(x^2+y^2)e^{2t}-y^2}},X_{21},X_{22}\right)
\end{align*}

Therefore, the enveloping Lie group of $\mathsf{Aff}(\mathbb{R})$ is the group generated by $\{\phi_{1,t},\phi_{2,t},\phi_{3,t},\phi_{4,t},\phi_{5,t}\}$. These generators are subject to relations, we exhibit some of them
\begin{align*}  [\phi_{1,t},\phi_{2,t}]&=\phi_{2,s(e^t-1)}, &
[\phi_{1,t},\phi_{3,t}]&=\phi_{2,s(e^{2t}-1)},& 
[\phi_{1,t},\phi_{4,t}]&=\phi_{2,s(e^{t}-1)},\\
[\phi_{2,t},\phi_{4,t}]&=\phi_{3,s(e^{t}-1)}, 
&
[\phi_{3,t},\phi_{5,t}]&=\phi_{2,s(e^{2t}-1)}, &
\end{align*}
where $[\  ,\ ]$ denotes the commutator.

\noindent\textbf{Acknowledgment}

We are very grateful to Martin Bordemann for his suggestions and comments which were  helpful to improve this manuscript.


\end{document}